\numberwithin{equation}{section}
\newtheorem{theorem}{Theorem}[section]
\newtheorem{corollary}[theorem]{Corollary}
\newtheorem{lemma}[theorem]{Lemma}
\newtheorem{example}[theorem]{Example}
\theoremstyle{definition}
\theoremstyle{Remark}
\newtheorem{remark}[theorem]{Remark}
\newtheorem{proposition}[theorem]{Proposition}
\numberwithin{equation}{section}
\newcommand{\charac}{\operatorname{char}}
\newcommand{\per}{\operatorname{per}}
\def\ZZ{{\mathbb Z}}
\def\CC{{\mathbb C}}
\def\FF{{\mathbb F}}
\begin{document}
\title[Vanishing Immanants ]{Vanishing Immanants}
\author[Cheraghpour \and Kuzma]{Hassan Cheraghpour$^{*}$ \and Bojan Kuzma}

\address{Hassan Cheraghpour:
University of Primorska, FAMNIT, Glagolja{\v s}ka 8, 6000 Koper, Slovenia.}
\email{cheraghpour.hassan@yahoo.com}

\address{Bojan Kuzma:
$ ^{1} $University of Primorska, FAMNIT, Glagolja{\v s}ka 8, 6000 Koper, Slovenia \and $ ^{2} $IMFM, Jadranska 19, 1000 Ljubljana, Slovenia.}
\email{bojan.kuzma@famnit.upr.si}

\thanks{2020 Mathematics Subject Classification: 05A17, 11P81, 20C15, 15A15, 15A86.}
%\subjclass{}%
\keywords{Integer partition, Young diagram, Irreducible character, Immanant, Alternate matrix.} 
\thanks{This work is supported in part by the Slovenian Research Agency (research program P1-0285 and research projects N1-0210, N1-0296 and J1-50000).}
\thanks{$^{*}$Corresponding author: Hassan Cheraghpour}
%\date{}%
%\dedicatory{}%
%\commby{}%
% --------------------------------------------------------------------------------------
\maketitle
% 1-----------------------------------------------------------------------------------------------

\begin{abstract}
\noindent
We classify all the irreducible characters of a symmetric group such that the induced immanant function $ d_{\chi} $ vanishes identically on alternate matrices with the entries in the complex field.

\end{abstract}

\maketitle

\section{\textbf{Introduction}} 
\noindent
Let $ \CC $ be the field of the complex numbers, $ S_{n} $ be the permutation group acting on the set $ \{ 1,\dots, n\} $, let $ \mathbb{M}_{n}(\CC) $ denote the space of all $ n$-by-$n $ matrices with entries in $ \CC $, and
$$ \mathbb{A}_{n}(\CC)=\{A =(a_{ij}) \in \mathbb{M}_{n}(\CC) \colon a_{ij}=-a_{ji} \text{ if } i<j,~a_{ii}=0 \} $$
be the subspace of the alternate matrices over $ \CC $.
The \textit{immanant} associated with the irreducible character $ \chi $ of $ S_{n} $ is the function $ d_{\chi} \colon \mathbb{M}_{n}(\CC) \rightarrow \CC $ given by 
$$ d_{\chi}(A) =\sum_{\sigma \in S_{n}}\chi(\sigma) \prod_{i=1}^{n} a_{i\sigma(i)}. $$

Notice that immanant generalizes the \textit{permanent},
$ \per(A)=\sum_{\sigma \in S_{n}}\prod_{i=1}^{n} a_{i\sigma(i)} $, where $ \chi=1 $ is the principal character, 
and the \textit{determinant},
$ \det(A)=\sum_{\sigma \in S_{n}} \varepsilon(\sigma)\prod_{i=1}^{n} a_{i\sigma(i)}$, where $ \chi=\varepsilon $ is the alternating character of $ S_{n} $.

Since computing an immanant $ d_{\chi} $ may not be always easy, one may hope to find a suitable linear map $ T $ that converts $ d_{\chi} $ into an immanant $ d_{\chi'} $.
Given a subspace $ V \subseteq \mathbb{M}_{n}(\CC) $, we say that a linear transformation $ T \colon V \rightarrow V $ \textit{converts} an immanant $ d_{\chi} $ into an immanant $ d_{\chi'} $ on $ V $ if $ d_{\chi'}(T(X))=d_{\chi}(X) $ for all $ X \in V $. In the case where $ \chi=\chi' $, one says that these maps \textit{preserve} the immanant $ d_{\chi} $.

The maps which convert/preserve immanants have been investigated in many papers. 
First, in 1994, Duffner \cite{Duf94} characterized  linear maps which preserve the immanant $ d_{\chi} $ on $ \mathbb{M}_{n}(\CC) $, $ n \geq 3 $, where $ \chi \notin \{ 1 , \varepsilon \} $ and proved that linear preservers of immanants are always non-singular. The main result shows that, for $n\ge 4$ and $ \chi \notin \{ 1 , \varepsilon \} $, the group of such  linear preservers is generated by  permutation of rows, permutation of columns, transposition, and a Hadamard multiplication with  a matrix that has some additional properties.
In 1997, Coelho and Duffner \cite{Coe97}, showed that the same result is true for linear immanant preservers on $ \mathbb{S}_{n}(\CC) $, the space of symmetric $ n $-by-$ n $ matrices where again $ \chi \notin \{ 1 , \varepsilon \} $. The only difference is that permutation of rows and columns should be done in unison.
In 2012, they \cite{Coe12} extended this result to the alternate matrices and classified the linear maps which preserve the immanant $ d_{\chi} $ on $ \mathbb{A}_{n}(\CC) $ (either $ n \geq 6 $, or $ n=4 $ and $ \chi=(2,2) $), where $ \chi \notin \{ 1 , \varepsilon, (n-1, n), (2,1^{n-2}) \} $. The result is exactly the same as for linear immanant preservers on $ \mathbb{S}_{n}(\CC) $, except for immanants which vanish identically on $ \mathbb{A}_{n}(\CC) $ (clearly any linear map will preserve such immanants).

Next, in 1998, Coelho and Duffner \cite{Coe98} proved that there is no linear map $ T\colon \mathbb{M}_{n}(\CC) \rightarrow \mathbb{M}_{n}(\CC) $, $ n \geq 3 $ that converts an immanant $ d_{\chi} $ into an immanant $ d_{\chi'} $, where $ \chi \neq \chi' $.
In 2003, they \cite{Coe03} extended this result to the symmetric matrices. They proved that there is no linear map $ T\colon \mathbb{S}_{n}(\CC) \rightarrow \mathbb{S}_{n}(\CC) $, $ n \geq 3 $ that converts an immanant $ d_{\chi} $ into an immanant $ d_{\chi'} $, where $ \chi \neq \chi' $.
In 2021, Duffner et al. \cite{Duf21.1} extended this result to the alternate matrices. They proved that there exists no map $ T\colon \mathbb{A}_{n}(\CC) \rightarrow \mathbb{A}_{n}(\CC) $, $ n \geq 6 $, satisfying $ d_{\chi}(A+\alpha B)=d_{\chi'}(T(A)+\alpha T(B)) $, $ \alpha \in \CC $, where $ \chi $ and $ \chi' $ are not proportional on the subset $ P_{n} $ of all permutations of $ S_{n} $ with no cycles of odd length in the decomposition into the product of disjoint cycles (i.e., there is no $ 0\neq \beta \in \CC $ such that $ \chi'|_{P_{n}}=\beta . \chi|_{P_{n}}$). Also, they characterized such maps $ T $ if $ \chi $ and $ \chi' $ are proportional on $ P_{n} $, but the induced immanants do not vanish identically on $ \mathbb{A}_{n}(\CC) $, and proved that $ T $ is bijective and linear.
Then, they extended \cite{Duf21.2} this result to the alternate matrices of order $ 4 $ and proved that there are no linear maps $ T\colon \mathbb{A}_{4}(\CC) \rightarrow \mathbb{A}_{4}(\CC) $ that converts an immanant $ d_{\chi} $ into an immanant $ d_{\chi'} $ for all matrices $ A \in \mathbb{A}_{4}(\CC) $, where $ \chi, \chi' \in \{1, \varepsilon, (2,2) \} $ are two distinct irreducible characters of $ S_{4} $.

In 2006, Coelho and Duffner \cite{Coe06} proved that if $ \chi $ and $ \chi' $ are arbitrary irreducible complex characters of $ S_{n} $ and $ T\colon \mathbb{M}_{n}(\CC) \rightarrow \mathbb{M}_{n}(\CC) $, $ n \geq 3 $, is a surjective map satisfying the condition $ d_{\chi}(A+\alpha B)=d_{\chi'}(T(A)+\alpha T(B)) $, for all $ A, B \in \mathbb{M}_{n}(\CC) $ and all $ \alpha \in \CC $, then $ T $ is linear. Their main theorem combined with known results on linear preservers/converters of immanants, allows us to characterize the pairs $ (\chi, \chi' ) $ for which such maps exist and, in the cases they exist, to obtain the respective description.

In 2008, Kuzma \cite{Kuz}, following the idea of \cite{Tan}, showed that if   $ \FF $ is any field with at least $n+1$ elements ($n\ge 3$) and $\chi,\chi'$ are irreducible characters of $S_n$, then the immanant converter on matrix pencils $ T \colon \mathbb{M}_{n}(\FF) \rightarrow \mathbb{M}_{n}(\FF) $,  $ d_{\chi}(A+\alpha B)=d_{\chi'}(T(A)+\alpha T(B)) $, $ A, B \in \mathbb{M}_{n}(\FF) $  is automatically  linear and bijective. His arguments  rely on the property that  for every pair of integers $(i,j)\in\{1,\dots,n\}$ there exists a permutation $\sigma\in S_n$ with $\sigma(i)=j$ and $\chi(\sigma)\ne 0\in \FF$. Note that this is automatically satisfied in fields of characteristic $0$. However if characteristic of the field is nonzero, it is not obvious that such a permutation $\sigma$ exists. We postpone this question and also give the correction of the main result of \cite{Kuz} in Addendum (section \ref{Addendum}).
This was extended, in 2014, by Coelho et al. \cite{Coe14} to matrix pencils consisting of symmetric matrices. Namely, following \cite{Coe06, Kuz}, they proved that the maps $ T\colon \mathbb{S}_{n}(\CC) \rightarrow \mathbb{S}_{n}(\CC) $ satisfying the condition $ d_{\chi}(A+\alpha B)=d_{\chi'}(T(A)+\alpha T(B)) $, for any fixed irreducible characters $ \chi, \chi'\colon S_{n} \rightarrow \CC $, any $ A, B \in \mathbb{S}_{n}(\CC) $, and any $ \alpha \in \CC $, are automatically linear and bijective.
In 2017, Duffner and Guterman \cite{Duf17} extended this result and studied converters between $ d_{\chi} $ and $ d_{\varepsilon}=\det $, where unlike \cite{Coe14} they considered matrix pencils spanned by singular symmetric matrices.

As already remarked, some immanants vanish identically on $\mathbb{A}_n(\CC)$. 
In \cite[Corollary 2.2]{Coe12}, it was shown that for every even integer $ n $, the immanant induced by a triangular character vanishes identically on $\mathbb{A}_n(\CC)$. In fact, the triangular character is the only character which vanishes on all the conjugacy classes whose cycle decomposition contains at least one transposition (see \cite{Coe97.2}). 

%Then, for any even integer $ n $, the immanant induced by a triangular character vanishes identically on $\mathbb{A}_n(\CC)$ (see \cite[Corollary 2.2]{Coe12}).

%In \cite{Coe97.2},. it has been proved that an irreducible character $  \chi$ of $ S_{n} $ is a triangular character if and only if it vanishes on all permutations whose cycle decomposition contains at least one cycle of even length

%In particular, this is true for the immanant induced by a triangular character, see \cite[Corollary 2.2]{Coe12}.

%For a complete understanding of relationship between irreducible characters of $S_n$  and the induced immanants on  $\mathbb{A}_n(\CC)$ we need to know the characters whose induced immanants vanish identically on $\mathbb{A}_n(\CC)$.
The main goal in this paper is to find all the irreducible characters such that the induced immanant function $ d_{\chi} $ vanishes identically on $ \mathbb{A}_{n}(\CC) $. 
Obviously, for $ A \in \mathbb{M}_{n}({\CC}) $ we have $ d_{\chi}(A^{T})=d_{\chi}(A) $, so for alternate matrices, $ d_{\chi}(A)=d_{\chi}(A^{T})=d_{\chi}(-A)=(-1)^{n}d_{\chi}(A) $. Hence, if $ n $ is odd, then $ d_{\chi}(A)=0 $ for any $ A \in \mathbb{A}_{n}(\CC) $. So, for finding the immanants which vanish identically, it is enough to consider the irreducible characters of $ S_{n} $ in the case when $ n $ is even. In this case, Duffner et al. \cite[Proposition 2.11]{Duf21.1} proved the following result.
\begin{proposition}\label{impn}
Let $ A \in \mathbb{A}_{n}(\CC) $. Then $ d_{\chi}(A) =\sum_{\sigma \in P_{n}}\chi(\sigma) \prod_{i=1}^{n} a_{i\sigma(i)} $.
\end{proposition}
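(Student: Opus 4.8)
The plan is to show that every permutation outside $P_n$ makes a zero net contribution to the sum $\sum_{\sigma\in S_n}\chi(\sigma)\prod_{i=1}^n a_{i\sigma(i)}$, so that only the permutations in $P_n$ survive. I would split the permutations possessing an odd cycle into two kinds and treat them by different mechanisms. First I would dispose of the permutations having a fixed point: if $\sigma(i)=i$ for some $i$, then the factor $a_{i\sigma(i)}=a_{ii}=0$ forces $\prod_{i=1}^n a_{i\sigma(i)}=0$. In particular every permutation whose cycle decomposition contains a $1$-cycle contributes nothing, which already removes the odd cycles of length one.

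The heart of the argument is a sign-reversing involution that annihilates the remaining permutations carrying an odd cycle (necessarily of length $\ge 3$). The key computation is local to a single cycle: if $(i_1\,i_2\,\cdots\,i_k)$ is a $k$-cycle of $\sigma$, its contribution to the product is $a_{i_1 i_2}a_{i_2 i_3}\cdots a_{i_k i_1}$, whereas reversing this cycle (replacing it by $(i_1\,i_k\,\cdots\,i_2)$ and leaving the other cycles untouched) replaces each factor $a_{i_j i_{j+1}}$ by $a_{i_{j+1} i_j}=-a_{i_j i_{j+1}}$, so by the alternating property the contribution gets multiplied by $(-1)^k$. Hence reversing one odd cycle multiplies $\prod_{i=1}^n a_{i\sigma(i)}$ by $-1$, while it leaves the cycle type — and therefore the conjugacy class and the character value $\chi(\sigma)$ — unchanged.

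I would then define the involution $\phi$ canonically: let $m$ be the least element lying in an odd cycle of $\sigma$, and let $\phi(\sigma)$ be obtained by reversing the (odd) cycle through $m$. Because reversing a cycle preserves its underlying element set and its length, $m$ remains the least element in an odd cycle of $\phi(\sigma)$, so $\phi(\phi(\sigma))=\sigma$ and $\phi$ is an involution; and since a cycle of odd length $\ge 3$ never equals its inverse, $\phi$ has no fixed points on the set of such permutations. By the computation above, $\chi(\phi(\sigma))\prod_i a_{i,\phi(\sigma)(i)}=-\chi(\sigma)\prod_i a_{i\sigma(i)}$, so these terms cancel in pairs. Combining this cancellation with the vanishing of the fixed-point terms leaves exactly $\sum_{\sigma\in P_n}\chi(\sigma)\prod_{i=1}^n a_{i\sigma(i)}$.

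The main obstacle is organizational rather than computational: one must make the choice of which cycle to reverse canonical (so that $\phi$ is genuinely an involution and applying it twice returns the original permutation), and one must verify that $\phi$ is fixed-point-free precisely on the permutations not already killed by a zero diagonal factor — that is, that reversing an odd cycle of length $\ge 3$ always yields a genuinely different permutation. The sign bookkeeping $(-1)^k=-1$ for odd $k$ and the conjugation-invariance $\chi(\phi(\sigma))=\chi(\sigma)$ are then routine.
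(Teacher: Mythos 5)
Your proof is correct and complete. The paper does not actually prove this proposition itself --- it imports it as \cite[Proposition 2.11]{Duf21.1} --- but your argument (the zero diagonal kills every term with a fixed point; reversing the odd cycle through the least element lying in an odd cycle is a canonical, fixed-point-free, cycle-type-preserving, sign-reversing involution on the remaining permutations outside $P_{n}$) is a complete self-contained proof and is essentially the standard one for the cited result.
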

So, for finding the vanishing immanants, it is enough to find the irreducible characters which are identically zero for any permutation $ \sigma \in P_{n} $.

Recall that irreducible characters of $ S_{n} $ are in bijective correspondence with partitions of integer $ n $ (see \cite[Theorem 2.4.6]{Sa}). Also, partitions are usually visualized by their Young diagrams.
With this in mind, our first main result is contained in Theorems \ref{Main1} and \ref{character}, where by using the recursive Murnaghan-Nakayama rule, we find all the Young diagrams with induced characters vanishing identically on $ P_{n} $. The second main result is Corollary \ref{m.cor} which classifies the immanants vanishing identically on alternate matrices.

\section{\textbf{Preliminaries}}
\noindent
A \textit{decomposition} $ \lambda $ of an integer $ n \geq 1 $ is a sequence $ (\lambda_{1},\dots, \lambda_{t}) $ where $ \lambda_{i} \in \mathbb{N} $ and $ \sum_{i=1}^{t}\lambda _{i}=n$. 
A \textit{partition} $ \lambda $ of an integer $ n $ is a decomposition $ (\lambda_{1},\dots, \lambda_{t}) $ of $ n $ where $ \lambda_{1} \geq \dots \geq \lambda_{t} $. If $ \lambda_{r}=\dots=\lambda_{s} $ for some $ r, s \in \{1, \dots, t \} $, then $ (\lambda_{1}, \dots ,\lambda_{r},\dots,\lambda_{s},\dots, \lambda_{t}) $ will be written as  $ (\lambda_{1},\dots, \lambda_{r}^{s-r+1}, \dots, \lambda_{t}) $ for simplicity.
A \textit{shape} $ {\bm \lambda} $ is a geometric object which consists finitely many one by one square cells arranged in left-justified rows and top-justified columns where adjacent cells are edge-connected. Shapes are in bijective correspondence with decompositions of integer $ n $, i.e., the number of cells in a shape. This correspondence is given by
$$  {\bm \lambda} \leftrightarrow (\lambda_{1},\dots, \lambda_{t}), $$
where $ \lambda_{i} $ is the number of cells in a row $ i $ and $ t $ is the number of rows of $ {\bm \lambda} $. We also use the notation 
$$ | {\bm \lambda} | =\sum_{i=1}^{t}\lambda _{i}=n. $$

If the number of cells in consecutive rows of a shape is decreasing, we call it a \textit{valid shape} or a \textit{\emph{(}Young\emph{)} diagram}. These are in bijective correspondence with partitions of integer $ n $.
Notice that a Young diagram $ {\bm \lambda}=(\lambda_{1},\dots, \lambda_{t}) $ is a shape such that $ \lambda_{1} \geq \dots \geq \lambda_{t} $.

A \textit{rim-hook} $ {\bm {\bm \zeta}} $ of a Young diagram $ {\bm \lambda} $ consists
of a chain of consecutive edge-connected cells by taking all the cells always leftwards or downwards such that the obtained shape $ {\bm \lambda} \setminus {\bm {\bm \zeta}} $ is also a Young diagram.
If all the cells of $ {\bm \zeta} $ are only in one row (respectively, one column), we call it a \textit{horizontal rim-hook} (respectively, \textit{vertical rim-hook}).

From now on, we simply say \textit{diagram} instead of Young diagram.

%%%%%%%%%%%%%%%%%%%%%%%%%%%%5555
%%%%%%%%%%%%%%%%%%%%%%%%%%%5555%
%%%%%%%%%%%%%%%%%%%%%%%%%%%%5555
\section{\textbf{Destructibility of diagrams}}
\noindent
Let $ {\bm \lambda} $ be a diagram. A \textit{domino} $ D $ in $ {\bm \lambda} $ is a pair of vertically or horizontally connected cells of $ {\bm \lambda} $ that intersect in a common edge, i.e., form a $ 2 \times 1 $ or $ 1 \times 2 $ rectangle. We call these two cells a \textit{vertical domino} or a \textit{horizontal domino}, respectively.

A \textit{domino rim-hook} is a rim-hook of length 2. Notice that it coincides with a single vertical or horizontal domino, respectively.

The diagram $ {\bm \lambda} $ is called \textit{destructible} if there exists a recursive procedure $ R_{{\bm \lambda}} $ which removes all the cells from $ {\bm \lambda} $ such that at each step we remove a single domino rim-hook (where a rim-hook refers to a diagram from this particular step, and so it is different at each step). If there is no such recursive procedure, we call the diagram \textit{indestructible}. For example, the diagram $ {\bm \lambda}=(2n) $ is a destructible diagram, since it consists of $ n $ horizontal dominoes. Examples of indestructible diagrams will be provided in the sequel.

Two dominoes are \textit{disjoint} if they do not share the same cell. It is easy to see that disjoint domino rim-hooks will never share the same edge, however they can still intersect in a common vertex. This happens for example in a diagram $ (3,3,2) $.

\begin{lemma}\label{D1-D2}
Let $ {\bm \lambda} $ be a diagram with two disjoint domino rim-hooks $ D $ and $ E $. Then $ E $ is a domino rim-hook of $ {\bm \lambda} \setminus D $.
\end{lemma}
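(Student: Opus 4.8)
The plan is to reduce the statement to a single verification: since $D$ and $E$ are disjoint, every cell of $E$ survives in ${\bm\lambda}\setminus D$ and the two cells of $E$ remain edge-connected there, so $E$ is still a (horizontal or vertical) domino sitting inside ${\bm\lambda}\setminus D$. Thus it suffices to show that $({\bm\lambda}\setminus D)\setminus E={\bm\lambda}\setminus(D\cup E)$ is again a diagram. To do this I would first record the explicit shape conditions under which a domino is a rim-hook, writing $\lambda_i$ for the length of row $i$: a horizontal domino is a rim-hook exactly when it occupies the two rightmost cells of some row $r$ with $\lambda_{r+1}\le\lambda_r-2$, and a vertical domino is a rim-hook exactly when it occupies the bottom two cells of the rightmost column of two equal rows $r,r+1$ (so $\lambda_r=\lambda_{r+1}=:j$) with $\lambda_{r+2}\le j-1$. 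In either case the removability of $E$ from a diagram $\mu$ depends only on the lengths of $E$'s own rows and of the single row immediately below them.

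The heart of the argument is the observation that disjointness forces $D$ to avoid the rows that $E$ genuinely occupies. I would verify this by running through the orientations of $D$ relative to $E$. For instance, if $E$ is horizontal in row $r$, then any horizontal $D$ in row $r$ would also sit at the right end of row $r$ and hence overlap $E$, while a vertical $D$ meeting row $r$ would either contain the rightmost cell $(r,\lambda_r)\in E$ or force $\lambda_{r+1}=\lambda_r$, contradicting $\lambda_{r+1}\le\lambda_r-2$; an entirely analogous check handles the case where $E$ is vertical in rows $r,r+1$. The conclusion of this bookkeeping is that $D$ can only meet rows lying strictly above or strictly below the rows occupied by $E$.

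With this in hand the result follows quickly. Removing $D$ leaves the lengths of $E$'s own rows unchanged, so the cells of $E$ are still the rightmost (resp.\ bottom-most) cells in their rows of $\mu={\bm\lambda}\setminus D$; and removing $D$ can only shorten the neighbouring rows, which merely relaxes or preserves the inequality ($\mu_{r+1}\le\mu_r-2$ in the horizontal case, $\mu_{r+2}\le j-1$ in the vertical case) required for $E$ to be a rim-hook. Since $\mu$ is already a valid diagram, the adjacency condition with the row above $E$ is automatic, and therefore $\mu\setminus E={\bm\lambda}\setminus(D\cup E)$ is a diagram, i.e.\ $E$ is a domino rim-hook of ${\bm\lambda}\setminus D$. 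The main obstacle is purely organizational: keeping the case distinctions (the four orientation pairs and the three relative positions above/same/below) honest and checking that disjointness genuinely excludes every conflicting configuration. The most delicate configuration is the one flagged just before the statement, where $D$ and $E$ touch only in a common vertex (as in $(3,3,2)$); there the two dominoes lie in adjacent but distinct rows, and one must confirm that the shortening caused by removing one of them still respects the length inequality needed by the other.
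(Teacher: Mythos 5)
Your proof is correct and follows essentially the same route as the paper's: both arguments reduce the claim to checking that the row-length inequalities certifying $E$ as a rim-hook survive the removal of $D$, using the fact that disjointness forces the two dominoes into different rows. The paper carries this out by writing the explicit row-length sequence of $({\bm \lambda}\setminus D)\setminus E$ in each orientation case, whereas you package the identical computation as a locality-plus-monotonicity observation and, usefully, make explicit the row-separation step that the paper leaves implicit in its choice of indices.
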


\begin{proof}
Write $ {\bm \lambda}=(\lambda_{1}, \dots, \lambda_{t}) $, where $ \lambda_{1} \geq \dots \geq \lambda_{t} $. %It suffices to show that this implies $ ({\bm \lambda} \setminus D) \setminus E $ is also a valid shape. 
First, if dominoes $ D $ and $ E $ are both horizontal, let $  i < j $ be the corresponding rows which contain them. Since $  {\bm \lambda} \setminus D $ and $ {\bm \lambda} \setminus E $ are both valid shapes, then $ \lambda_{i}-2 \geq \lambda_{i+1} $ and $ \lambda_{j}-2 \geq \lambda_{j+1} $. Therefore
$$ ({\bm \lambda} \setminus D) \setminus E =(\lambda_{1},\dots,\lambda_{i-1},\lambda_{i}-2,\lambda_{i+1},\dots,\lambda_{j-1}, \lambda_{j}-2,\lambda_{j+1},\dots,\lambda_{t}) $$
is a valid shape, and hence $ E $ is a domino rim-hook of $ {\bm \lambda} \setminus D $.

Second, if one of $ D,E $ is vertical, in rows $ i-1 $ and $ i $, and another is horizontal, in row $ j > i$, then, by a similar argument, $\lambda_{i-1}-1 =\lambda_{i}-1 \geq \lambda_{i+1} $ and $ \lambda_{j}-2 \geq \lambda_{j+1} $. Therefore 
$$ ({\bm \lambda} \setminus D) \setminus E =(\lambda_{1},\dots, \lambda_{i-2}, \ \lambda_{i-1}-1,\lambda_{i}-1,\ \lambda_{i+1},\dots,\lambda_{j-1}, \ \lambda_{j}-2,\ \lambda_{j+1},\dots,\lambda_{t}) $$
is again a valid shape, and the same conclusion holds. Likewise, we argue if $ j < i-1 $ or if $ D,E $ are both vertical (and $\lambda_{i-1}-1 =\lambda_{i}-1 \geq \lambda_{i+1} $ and $\lambda_{j-1}-1 =\lambda_{j}-1 \geq \lambda_{j+1} $).
%Now, assume that $ D $ and $ E $ are both vertical, and let $ \{ i-1, i \} $ and $ \{ j-1,j \} $ be the corresponding rows which contain them. Then $\lambda_{i-1}-1 =\lambda_{i}-1 \geq \lambda_{i+1} $ and $\lambda_{j-1}-1 =\lambda_{j}-1 \geq \lambda_{j+1} $. Therefore
%$ ({\bm \lambda} \setminus D) \setminus E $
%is again a valid shape, as claimed.
\end{proof}

The following lemma is crucial for our subsequent investigation.

\begin{lemma}\label{dest. domino}
Let $ {\bm \lambda} $ be a diagram with $| {\bm \lambda} |=2n$, and let $D$ be one of its domino rim-hooks. Then $ {\bm \lambda} $ is destructible if and only if $ {\bm \lambda} \setminus D $ is destructible.
\end{lemma}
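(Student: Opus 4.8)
The equivalence has a trivial and a substantial direction. If ${\bm \lambda}\setminus D$ is destructible, then ${\bm \lambda}$ is destructible simply by removing $D$ first (it is a domino rim-hook of ${\bm \lambda}$, so ${\bm \lambda}\setminus D$ is a valid diagram) and then running a destruction of ${\bm \lambda}\setminus D$. The plan is therefore to concentrate on the forward implication, which I would prove by strong induction on $n=|{\bm \lambda}|/2$, the base case $n=1$ being immediate since a two-cell diagram is a single domino. For the inductive step, I would fix a destruction of ${\bm \lambda}$ and let $D_1$ be the first domino it removes, so that ${\bm \lambda}\setminus D_1$ is destructible with $2n-2$ cells. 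I then compare the target rim-hook $D$ with $D_1$; if $D=D_1$ there is nothing to prove.

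If $D$ and $D_1$ are disjoint, Lemma~\ref{D1-D2} (applied in both orders) gives that $D$ is a domino rim-hook of ${\bm \lambda}\setminus D_1$ and that $D_1$ is a domino rim-hook of ${\bm \lambda}\setminus D$. Applying the induction hypothesis to the destructible diagram ${\bm \lambda}\setminus D_1$ with the rim-hook $D$ shows that $({\bm \lambda}\setminus D_1)\setminus D$ is destructible; since removing disjoint dominoes is order independent, this coincides with $({\bm \lambda}\setminus D)\setminus D_1$. Prepending the rim-hook $D_1$ then yields a destruction of ${\bm \lambda}\setminus D$.

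The remaining, and genuinely delicate, case is when $D$ and $D_1$ overlap. As they are distinct dominoes they meet in exactly one cell, so $D\cup D_1$ is a tromino, and the first task is a classification. Using that a horizontal domino rim-hook must consist of the two rightmost cells of a row, while a vertical one must consist of the two bottom cells of a column whose two rows have equal length, I would check that two overlapping domino rim-hooks can form neither a straight tromino nor an arbitrarily oriented $L$; the only surviving configuration is the $L$-tromino occupying $(j,c),(j+1,c-1),(j+1,c)$ for some rows $j<j+1$ with $\lambda_j=\lambda_{j+1}=c$ and $\lambda_{j+2}\le c-2$, with $\{D,D_1\}$ equal to the vertical domino $V=\{(j,c),(j+1,c)\}$ and the horizontal domino $H=\{(j+1,c-1),(j+1,c)\}$. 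The missing fourth cell $(j,c-1)$ lies in ${\bm \lambda}$ and completes a $2\times 2$ block $B=V\sqcup V'=H\sqcup H_{\mathrm{top}}$, where $V'=\{(j,c-1),(j+1,c-1)\}$ and $H_{\mathrm{top}}=\{(j,c-1),(j,c)\}$.

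The idea in this case is to \emph{complete the block}. Say $D_1=V$ and $D=H$ (the other labelling being symmetric). Since ${\bm \lambda}\setminus V$ is destructible and $V'$ is a vertical domino rim-hook of it (rows $j,j+1$ now have common length $c-1$ and $\lambda_{j+2}\le c-2$), the induction hypothesis makes $({\bm \lambda}\setminus V)\setminus V'={\bm \lambda}\setminus B$ destructible. Because $B=H\sqcup H_{\mathrm{top}}$, this is $({\bm \lambda}\setminus H)\setminus H_{\mathrm{top}}$; and $H_{\mathrm{top}}$ is a horizontal domino rim-hook of ${\bm \lambda}\setminus H$ (row $j$ still has length $c$, row $j+1$ has length $c-2$, so the inequality $c-2\ge c-2$ holds). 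Prepending $H_{\mathrm{top}}$ then destroys ${\bm \lambda}\setminus H={\bm \lambda}\setminus D$, completing the induction. The main obstacle I anticipate is precisely this overlap case: pinning down that only the single $L$-tromino can occur, and recognising that the complementary domino of the $2\times 2$ block is a rim-hook at exactly the right stage of the recursion.
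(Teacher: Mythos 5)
Your proposal is correct and follows essentially the same route as the paper: the easy direction by prepending $D$, and the forward direction by induction comparing $D$ with the first domino removed, splitting into the equal, disjoint (via Lemma~\ref{D1-D2}), and overlapping cases, where in the overlap case you complete the $2\times2$ block exactly as the paper does with its dominoes $E'$ and $D'$ (your $V'$ and $H_{\mathrm{top}}$). The only cosmetic difference is that you make the $L$-tromino classification explicit up front, whereas the paper derives the same configuration inside its cases 3 and 4.
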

\begin{proof}
Assume $ {\bm \lambda} \setminus D $ is destructible and let 
\begin{equation*}
R_{{\bm \lambda} \setminus D}=\{ {\bm \lambda} \setminus D \rightarrow ({\bm \lambda} \setminus D)^{(1)} \rightarrow \dots \rightarrow ({\bm \lambda} \setminus D)^{(n-2)} \rightarrow ({\bm \lambda} \setminus D)^{(n-1)}=(0) \}
\end{equation*}
be a recursive procedure for $ {\bm \lambda} \setminus D $. Then
\begin{equation}\label{R}
R_{{\bm \lambda}}=\{ {\bm \lambda} \rightarrow {\bm \lambda} \setminus D \rightarrow ({\bm \lambda} \setminus D)^{(1)} \rightarrow \dots \rightarrow ({\bm \lambda} \setminus D)^{(n-2)} \rightarrow ({\bm \lambda} \setminus D)^{(n-1)}=(0) \}
\end{equation}
is a recursive procedure for $ {\bm \lambda} $. Therefore $ {\bm \lambda} $ is destructible.

Conversely, assume $ {\bm \lambda} $ is destructible. We use induction on the number $ n $ of dominoes in $ {\bm \lambda} $. The case $ n=1 $ is trivial. If $ n=2 $, then $ {\bm \lambda} $ can take one among the five different forms $ (4) $, $ (3,1) $, $ (2,2) $, $ (2,1,1) $ and $ (1,1,1,1) $. By removing a single domino rim-hook from any one of these, we get either a diagram $ (2) $ or a diagram $ (1,1)$, both of which are obviously destructible. This proves the base of induction.

Assume now that for any destructible diagram $ {\bm \gamma} $ with $ |{\bm \gamma}|\leq 2n $, the diagram $ {\bm \gamma} \setminus D $, where $ D $ is any of its domino rim-hooks, is always destructible. Now, let
\begin{equation*}
{\bm \lambda}=(\lambda_{1}, \dots, \lambda_{t}, 0, 0, \dots), \;\;\;\; |{\bm \lambda}|=2n+2
\end{equation*}
be destructible with $ n+1 $ dominoes, let $ D $ be its domino rim-hook, and let
\begin{equation*}
R_{{\bm \lambda}}=\{ {\bm \lambda} \rightarrow {\bm \lambda}^{(1)} \rightarrow \dots \rightarrow {\bm \lambda}^{(n)} \rightarrow {\bm \lambda}^{(n+1)}=(0) \}
\end{equation*}
be a recursive procedure for $ {\bm \lambda} $. Therefore, $ {\bm \lambda}^{(1)} $ is also destructible. Notice that $ {\bm \lambda}^{(1)}={\bm \lambda} \setminus E $, where $ E $ is a domino rim-hook of $ {\bm \lambda} $. We have four cases:\\

1. Assume that at the first step domino $ D $ is completely removed. Then $ D=E $, so $ {\bm \lambda} \setminus D={\bm \lambda}^{(1)}$ is destructible.

2. Assume that at the first step none of the cells of domino $ D $ is removed. By Lemma \ref{D1-D2}, $ D $ is also a domino rim-hook of $ {\bm \lambda}^{(1)} $. Since $ {\bm \lambda}^{(1)} $ is destructible with $ |{\bm \lambda}^{(1)}|=2n $, by the induction, $ {\bm \lambda}^{(1)} \setminus D=({\bm \lambda} \setminus E) \setminus D=({\bm \lambda} \setminus D) \setminus E $ is also destructible. Therefore, there exists a recursive procedure
$$ \{ ({\bm \lambda} \setminus D) \rightarrow ({\bm \lambda} \setminus D)^{(1)}=({\bm \lambda} \setminus D) \setminus E \rightarrow \dots \rightarrow (0) \} $$
for $ {\bm \lambda} \setminus D $.

3. Assume that $ D $ is a horizontal domino rim-hook and at the first step, domino $ E $ removed a single cell from  $ D $. Then $ E $ was vertical. Let $ D $ belong to a row $ i $. Then $ \lambda_{i} \geq \lambda_{i+1} + 2 $, and hence $ E $ cannot belong to rows $ i $ and $ i+1 $. Therefore, since $ {\bm \lambda} \setminus E $ is a valid shape,
\begin{equation}\label{D1-E1}
\lambda_{i-1}=\lambda_{i} \geq \lambda_{i+1} + 2. 
\end{equation}
Let $ E' $ be the vertically placed domino in the last cells of rows $ i-1 $ and $ i $ of $ {\bm \lambda}^{(1)}= {\bm \lambda} \setminus E $. By \eqref{D1-E1},
\begin{equation}\label{D2-E2}
{\bm \lambda}^{(1)} \setminus E' =({\bm \lambda} \setminus E) \setminus E'=(\lambda_{1}, \dots, \ \lambda_{i-1}-2,\lambda_{i}-2, \ \lambda_{i+1}, \dots, \lambda_{t},0, 0, \dots)
\end{equation}
is a valid shape, and hence $ E' $ is a domino rim-hook of a destructible diagram $ {\bm \lambda}^{(1)} $ with $ n $ dominoes. 
Notice that \eqref{D2-E2} can also be written as $ ({\bm \lambda} \setminus D) \setminus D' $, where $ D' $ is horizontal domino occupying the last two cells of row $ i-1 $, and since \eqref{D2-E2} is a valid shape, $ D' $ is a domino rim-hook of $ {\bm \lambda} \setminus D $.

By the induction, $ {\bm \lambda}^{(1)} \setminus E'=({\bm \lambda} \setminus E) \setminus E'=({\bm \lambda} \setminus D) \setminus D' $ is also destructible, and hence there exists a recursive procedure 
$$ \{ ({\bm \lambda} \setminus D) \rightarrow ({\bm \lambda} \setminus D)^{(1)}=({\bm \lambda} \setminus D) \setminus D' \rightarrow \dots \rightarrow (0) \} $$
for $ {\bm \lambda} \setminus D $.

4. Assume that $ D $ is a vertical domino rim-hook and at the first step, domino $ E $ removed a single cell from  $ D $. Then $ E $ was horizontal. Let $ D $ belong to rows $ i -1 $ and $ i $. Then $ \lambda_{i-1}=\lambda_{i} \geq \lambda_{i+1} + 1 $, and hence $ E $ cannot belong to row $ i-1 $. Therefore, since $ {\bm \lambda} \setminus E $ is a valid shape,
\begin{equation}\label{D3-E3}
\lambda_{i-1}=\lambda_{i} \geq \lambda_{i+1} + 2. 
\end{equation}
Let $ E' $ be the horizontally placed domino in the last two cells of row $ i-1 $ of $ {\bm \lambda}^{(1)}= {\bm \lambda} \setminus E $. By \eqref{D3-E3},
\begin{equation}\label{D4-E4}
{\bm \lambda}^{(1)} \setminus E' =({\bm \lambda} \setminus E) \setminus E'=(\lambda_{1}, \dots, \lambda_{i-2}, \ \lambda_{i-1}-2,\lambda_{i}-2, \ \lambda_{i+1}, \dots, \lambda_{t},0, 0, \dots)
\end{equation}
is a valid shape, and hence $ E' $ is a domino rim-hook of a destructible diagram $ {\bm \lambda}^{(1)} $ with $ n $ dominoes. 
Notice that \eqref{D4-E4} can also be written as $ ({\bm \lambda} \setminus D) \setminus D' $, where $ D' $ is vertical domino occupying the last cells of rows $ i-1 $ and $ i $, and since \eqref{D4-E4} is a valid shape, $ D' $ is a domino rim-hook of $ {\bm \lambda} \setminus D $.

By the induction, $ {\bm \lambda}^{(1)} \setminus E'=({\bm \lambda} \setminus E) \setminus E'=({\bm \lambda} \setminus D) \setminus D' $ is also destructible, and hence there exists a recursive procedure 
$$ \{ ({\bm \lambda} \setminus D) \rightarrow ({\bm \lambda} \setminus D)^{(1)}=({\bm \lambda} \setminus D) \setminus D' \rightarrow \dots \rightarrow (0) \} $$
for $ {\bm \lambda} \setminus D $.
\end{proof}

To simplify, let us denote the diagram $ {\bm \lambda}=(m, m-1, \dots, i) $, $ m \geq 2 $, by $ \bigtriangledown_{i} ^{m}$. If $ i=1 $ the diagram is called \textit{triangular}.

\begin{lemma}\label{Tr2}
A diagram $ {\bm \lambda} $ is triangular if and only if there is no domino rim-hook in $ {\bm \lambda} $.
\end{lemma}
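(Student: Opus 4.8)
The plan is to translate both "triangular" and "has a domino rim-hook" into elementary conditions on the successive row-length differences, and then to compare the two. Write $\lambda = (\lambda_1, \dots, \lambda_t)$ with $\lambda_1 \geq \dots \geq \lambda_t \geq 1$, set $\lambda_{t+1} := 0$, and define the gaps $g_i := \lambda_i - \lambda_{i+1} \geq 0$ for $i = 1, \dots, t$. Since $\bigtriangledown_1^m = (m, m-1, \dots, 1)$, the diagram $\lambda$ is triangular precisely when $g_i = 1$ for every $i$ (this forces $\lambda_t = 1$, $\lambda_{t-1} = 2$, and so on up the diagram). The whole proof then reduces to showing that $\lambda$ has no domino rim-hook if and only if every $g_i = 1$.

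The key step is to characterize each type of domino rim-hook by a gap condition. For a horizontal domino, namely the last two cells of some row $i$, removal leaves a valid diagram exactly when $\lambda_i - 2 \geq \lambda_{i+1}$, i.e. $g_i \geq 2$; hence a horizontal domino rim-hook exists iff some $g_i \geq 2$. For a vertical domino in rows $i$ and $i+1$, both of its cells must be the rightmost cells of their rows, for otherwise deleting them would leave a hole; this forces $\lambda_i = \lambda_{i+1}$, i.e. $g_i = 0$. After removal the two shortened rows share the common length $\lambda_{i+1} - 1$, which is admissible iff $\lambda_{i+1} - 1 \geq \lambda_{i+2}$, i.e. $g_{i+1} \geq 1$. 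Thus a vertical domino rim-hook exists iff $g_i = 0$ and $g_{i+1} \geq 1$ for some $i \leq t-1$. Justifying these two equivalences carefully, in particular the claim that both cells of a vertical domino must sit at the end of their rows, is the main technical point, though it is short.

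With these equivalences in hand the forward implication is immediate: if $\lambda$ is triangular then every $g_i = 1$, so no gap is $\geq 2$ (no horizontal rim-hook) and no gap is $0$ (no vertical rim-hook). For the converse I would argue directly: assume $\lambda$ has no domino rim-hook. Absence of horizontal rim-hooks gives $g_i \in \{0,1\}$ for all $i$; absence of vertical rim-hooks then says that $g_i = 0$ (for $i \leq t-1$) always forces $g_{i+1} = 0$, so a zero gap propagates to all later gaps. But $g_t = \lambda_t \geq 1$ because the bottom row is nonempty, whence $g_t = 1$; together with the propagation this rules out any zero gap, leaving $g_i = 1$ for all $i$, that is, $\lambda$ is triangular.

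The one subtlety to flag is the degenerate single-cell diagram $(1)$: it has no domino rim-hook yet equals $\bigtriangledown_1^1$ with $m = 1$, which falls outside the convention $m \geq 2$. I would therefore carry out the argument under the standing assumption that $|\lambda| \geq 2$, which is all that is needed in the sequel since the relevant diagrams satisfy $|\lambda| = 2n$, so that "all gaps equal $1$" genuinely produces a triangular diagram. Apart from this boundary check and the careful treatment of the vertical-domino equivalence, every step is a routine comparison of inequalities.
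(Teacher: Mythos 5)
Your proof is correct and takes essentially the same route as the paper: both directions reduce to the observation that a horizontal domino rim-hook exists iff some consecutive row-length difference is at least $2$ and a vertical one exists iff two consecutive rows are equal with the next row strictly shorter, and your propagation-of-zero-gaps argument is just a rephrasing of the paper's choice of the last row in a run of equal-length rows. Your remark about the degenerate diagram $(1)$ (excluded by the paper's convention $m\ge 2$ for triangular diagrams) is a fair boundary observation that the paper leaves implicit, and it is harmless here since the lemma is only applied to diagrams of even size.
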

\begin{proof}
A vertical domino rim-hook requires two consecutive rows of equal length, while a horizontal domino rim-hook requires that there exist two consecutive rows whose lengths differ by at least two, but neither is possible if $ {\bm \lambda}=\bigtriangledown_{1}^{m} $.

Now, assume that $ {\bm \lambda} $ is not triangular. Then there exist two consecutive rows $ i $ and $ i+1 $ such that $ d_{i}=\lambda_{i}-\lambda_{i+1} \neq 1 $. If $ d_{i} \geq 2 $, then there exists a horizontal domino rim-hook in row $ i $. If $ d_{i}=0 $, let $ k \geq i+1 $ be the last row with $ \lambda_{i}=\lambda_{i+1}= \dots=\lambda_{m} $. Then there exists a vertical domino rim-hook in rows $ m-1 $ and $ m $. 
\end{proof}

Clearly if there is no recursive procedure, the diagram must be indestructible. Hence, by Lemma \ref{Tr2} we can state the following corollary:
\begin{corollary}\label{Tr3}
Every triangular diagram $ \bigtriangledown_{1}^{m} $, where $ m > 1 $, is indestructible.
\end{corollary}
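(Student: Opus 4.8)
The plan is to derive Corollary \ref{Tr3} directly from Lemma \ref{Tr2}, which characterizes triangular diagrams as exactly those containing no domino rim-hook. Since $\bigtriangledown_1^m$ with $m>1$ is triangular by definition, Lemma \ref{Tr2} immediately tells us that it possesses no domino rim-hook whatsoever.

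The key observation is that any recursive procedure $R_{\bm\lambda}$ witnessing destructibility must, by definition, remove a single domino rim-hook at each step, and in particular at its very first step applied to $\bm\lambda=\bigtriangledown_1^m$ itself. Since $\bigtriangledown_1^m$ offers no domino rim-hook to remove, this first step cannot be performed, so no recursive procedure can exist. Invoking the remark immediately preceding the corollary (if there is no recursive procedure, the diagram is indestructible), we conclude that $\bigtriangledown_1^m$ is indestructible.

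I expect no genuine obstacle here: all the combinatorial work has already been carried out in Lemma \ref{Tr2}, so the corollary is an essentially immediate consequence. It is worth noting that a crude parity argument would not suffice, since $|\bigtriangledown_1^m|=\frac{m(m+1)}{2}$ is even for infinitely many $m$ (for instance $m=3$ gives $6$ cells), and thus one genuinely needs the structural statement of Lemma \ref{Tr2} rather than merely counting cells. The only mild point to keep in mind is the correct reading of ``recursive procedure'': it is precisely the requirement that a domino rim-hook be available at the first step, before any cell has been removed, that renders the argument one line long.
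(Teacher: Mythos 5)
Your proposal is correct and follows exactly the paper's own route: the paper likewise derives the corollary immediately from Lemma \ref{Tr2} together with the observation that a diagram admitting no recursive procedure (because no domino rim-hook is available at the first step) is indestructible. Your extra remark about parity is accurate but not needed.
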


We generalize the above corollary in the following lemma that classifies the indestructible diagrams.

\begin{lemma}\label{Triangle-Remove}
Let $ {\bm \lambda} $ be a diagram with $ |{\bm \lambda}| $ even. The followings are equivalent:
\begin{itemize}
\item[(i)] $ {\bm \lambda} $ is indestructible.
\item[(ii)] $ {\bm \lambda} $ is triangular or else there exists a recursive procedure of domino rim-hook removal which transforms $ {\bm \lambda} $ into a triangular diagram.
\item[(iii)] Every recursive procedure of domino rim-hook removal applied on $ {\bm \lambda} $ will eventually end up in a triangular diagram.

\item[(iv)] There exists a triangular diagram $ \bigtriangledown_{1}^{m} $ such that every recursive procedure of domino rim-hook removal will transform $ {\bm \lambda} $ into $ \bigtriangledown_{1}^{m} $.
\end{itemize}

\end{lemma}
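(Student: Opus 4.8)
The plan is to establish the cycle of implications (i) $\Rightarrow$ (iii) $\Rightarrow$ (iv) $\Rightarrow$ (ii) $\Rightarrow$ (i). Throughout I use that any recursive procedure of domino rim-hook removal necessarily terminates, since $|{\bm \lambda}|$ strictly decreases by $2$ at every step, and that it halts exactly at a diagram possessing no domino rim-hook. By Lemma \ref{Tr2} such a terminal diagram is either empty or triangular; because $|{\bm \lambda}|$ is even and every step removes two cells, the size stays even, so a nonempty terminal diagram can never be the single cell $(1)$ and is therefore a genuine triangular diagram $\bigtriangledown_1^{m}$ with $m\ge 3$.

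Two of the implications are immediate. For (i) $\Rightarrow$ (iii): if ${\bm \lambda}$ is indestructible then no procedure reaches the empty diagram, so every maximal procedure halts at a nonempty terminal diagram, which by the remark above is triangular. For (iv) $\Rightarrow$ (ii): if every procedure transforms ${\bm \lambda}$ into a fixed $\bigtriangledown_1^m$, then either ${\bm \lambda}$ is already triangular (first alternative of (ii)) or it admits a domino rim-hook and any procedure started from it witnesses the second alternative. The implication (ii) $\Rightarrow$ (i) is where the earlier structural results enter: if ${\bm \lambda}$ is triangular it is indestructible by Corollary \ref{Tr3} (note $m\ge 3>1$, since $|{\bm \lambda}|$ is even), while if some procedure removes dominoes $D_1,\dots,D_k$ carrying ${\bm \lambda}$ to a triangular $\bigtriangledown_1^m$, I argue by contradiction: were ${\bm \lambda}$ destructible, iterating Lemma \ref{dest. domino} along $D_1,\dots,D_k$ would force $\bigtriangledown_1^m$ to be destructible, contradicting Corollary \ref{Tr3}.

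The crux is (iii) $\Rightarrow$ (iv), namely the uniqueness of the terminal triangular diagram independently of the removal order; this is the only genuine obstacle. Rather than prove full confluence (for instance via local confluence from Lemma \ref{D1-D2} together with a Newman-type argument), I would introduce the checkerboard invariant $S({\bm \mu})=\sum_{(r,c)\in{\bm \mu}}(-1)^{r+c}$. Each domino, horizontal or vertical, covers two edge-adjacent cells whose coordinate sums $r+c$ differ by $1$, hence one cell of each colour; removing a domino therefore leaves $S$ unchanged, so $S$ is constant along every procedure. A direct computation gives $S(\bigtriangledown_1^{m})=(-1)^{m+1}\lceil m/2\rceil$; since this value is positive for odd $m$ and negative for even $m$ while $|S(\bigtriangledown_1^{m})|=\lceil m/2\rceil$ is strictly increasing, the assignment $m\mapsto S(\bigtriangledown_1^m)$ is injective. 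Granting (iii), every procedure ends at some triangular diagram, all of which share the common value $S({\bm \lambda})$; by injectivity they are all the same $\bigtriangledown_1^m$, which is precisely (iv).

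As indicated, the main obstacle is pinning down this order-independence. The checkerboard colouring reduces it to two routine verifications — invariance of $S$ under a single domino removal, and injectivity of $m\mapsto S(\bigtriangledown_1^m)$ — and thereby sidesteps the more delicate case analysis of overlapping domino rim-hooks that a direct confluence proof would otherwise demand.
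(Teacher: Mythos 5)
Your proof is correct, and for the one nontrivial implication it takes a genuinely different route from the paper's. The easy implications and (ii) $\Rightarrow$ (i) (triangular diagrams are indestructible by Corollary \ref{Tr3}, and destructibility would propagate along the procedure by iterating Lemma \ref{dest. domino}) coincide with the paper's argument, as does (i) $\Rightarrow$ (iii) (a maximal procedure halts only at a diagram with no domino rim-hook, which by Lemma \ref{Tr2} is triangular or empty, and the empty outcome is excluded by indestructibility). The real content is the order-independence of the terminal triangle, i.e.\ (iii) $\Rightarrow$ (iv). The paper handles this by reversing one procedure and running the other, reducing to the claim that the skew staircase $\bigtriangledown_{1}^{m'}\setminus\bigtriangledown_{1}^{m}$ (for $m<m'$) admits no domino tiling, which it proves by a forced-moves argument down the rows. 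You instead use the checkerboard invariant $S({\bm \mu})=\sum_{(r,c)}(-1)^{r+c}$: it is unchanged by removing any domino, and $S(\bigtriangledown_{1}^{m})=(-1)^{m+1}\lceil m/2\rceil$ determines $m$, so all terminal triangles of a given ${\bm \lambda}$ coincide. Both arguments are sound; yours is arguably cleaner, since it applies the invariant directly to the whole diagrams and thereby sidesteps the paper's intermediate reduction to a tiling of the skew shape (a step the paper treats somewhat informally, as the dominoes added when reversing one procedure and those removed by the other need not interact disjointly). One small slip: $\lceil m/2\rceil$ is not strictly increasing in $m$ (it is constant on $\{2k-1,2k\}$); injectivity of $m\mapsto S(\bigtriangledown_{1}^{m})$ nevertheless holds because the sign separates the two parities of $m$ and $\lceil m/2\rceil$ is strictly increasing within each parity class, so the conclusion stands.
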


\begin{proof}
(i) $ \Rightarrow $ (iii). Suppose that there exists a recursive procedure of domino rim-hook removal applied on $ {\bm \lambda} $ which does not end up in a triangular diagram. Then, by Lemma \ref{Tr2}, at each step there exists a domino rim-hook until removing all the cells from $ {\bm \lambda} $. Therefore $ {\bm \lambda} $ is destructible, contradicting (i).

(iii) $ \Rightarrow $ (ii) is obvious.

(ii) $ \Rightarrow $ (i). If $ {\bm \lambda} $ is triangular, then by Corollary \ref{Tr3}, $ {\bm \lambda} $ is indestructible. Now, assume that there exists a recursive procedure of domino rim-hook removal which transforms $ {\bm \lambda} $ into a triangular diagram. Since, by Lemma \ref{Tr2}, the transformed triangular diagram is indestructible, by recursively using Lemma \ref{dest. domino}, $ {\bm \lambda} $ is also indestructible.

(iv) $ \Rightarrow $ (iii) is obvious.

(iii) $ \Rightarrow $ (iv). We recall that $ {\bm \lambda} $ consists of cells indexed by left-justified rows and top-justified columns where adjacent cells are edge-connected. We denote the cells of row $ i $ in the diagram $ {\bm \lambda} $ by $ C_{i1}, C_{i2},\dots, C_{i\lambda_{i}} $ here $ C_{ij} $ belongs to row $ i $ and column $ j $ and the top-most row and the left-most column have index~$ 1 $. With this notation each vertical/horizontal domino can be written as a set
\begin{equation}\label{DvDh}
D^{v}=\{ C_{(i-1)j}, C_{ij} \}, \quad \hbox{respectively} \quad D^{h}=\{ C_{i(j-1)}, C_{ij} \}
\end{equation}
for some $ i $ and $ j $.

Let $ m , m' > 1 $, where $ m < m' $, be two distinct integers such that two recursive procedures of domino rim-hook removal $ R_{{\bm \lambda}} $ and $ R'_{{\bm \lambda}} $ applied on $ {\bm \lambda} $ will eventually end up in triangular diagrams $ \bigtriangledown_{1} ^{m} $ and $ \bigtriangledown_{1} ^{m'} $, respectively. By reversing the procedure $ R_{{\bm \lambda}} $, the diagram $ {\bm \lambda} $ can be obtained from $ \bigtriangledown_{1} ^{m} $ by adding domino rim-hooks. We then apply the procedure $ R'_{{\bm \lambda}} $ to see that the diagram $ \bigtriangledown_{1} ^{m'} $ can also be obtained from $ \bigtriangledown_{1} ^{m} $ by adding domino rim-hooks. Then $ \bigtriangledown_{1} ^{m'} \setminus \bigtriangledown_{1} ^{m} $ must be covered with non-overlapping, i.e., disjoint dominoes. We will show that this is impossible. Notice that $ C_{1m'} $, the last cell of first row in $ \bigtriangledown_{1} ^{m'} \setminus \bigtriangledown_{1} ^{m} $, is also the only cell in its last column, and therefore it can only be covered with a horizontal domino $ D_{1}^{h}=\{ C_{1(m'-1)}, C_{1m'} \} $. Then $ C_{2(m'-1)} $, the last cell of second row, cannot be covered with a vertical domino $ D_{1}^{v}=\{ C_{1(m'-1)}, C_{2(m'-1)} \} $, because $ D_{1}^{v} $ intersects $ D_{1}^{h} $ in the cell $ C_{1(m'-1)} $. Therefore $ C_{2(m'-1)} $ is also covered with a horizontal domino $ D_{2}^{h}=\{ C_{2(m'-2)}, C_{2(m'-1)} \} $. This allows us to recursively proceed forwards and show that the last cell of each succeeding row can only be covered with a horizontal domino. This is a contradiction, because the last row in $ \bigtriangledown_{1} ^{m'} \setminus \bigtriangledown_{1} ^{m} $ contains only one cell and it cannot be covered with a horizontal domino.
\end{proof}

Now, we give the prototypical example of indestructible diagrams, which generalizes triangular ones from Corollary \ref{Tr3}.

\begin{example}\label{Initial pattern}
\textnormal{
The diagram $ {\bm \lambda}=(2n-\frac{m(m+1)}{2},\bigtriangledown_{1} ^{m}) $, where $ m \geq 2 $, is indestructible.
To see this we first rewrite $ {\bm \lambda}=(m+k,\bigtriangledown_{1} ^{m}) $. By removing recursively domino rim-hooks from the first row, we have two cases.
If $ k $ is odd, then the obtained diagram $ (m+1,\bigtriangledown_{1} ^{m})=\bigtriangledown_{1} ^{m+1} $ is triangular, hence, by Lemma \ref{Triangle-Remove}, is indestructible.
If $ k $ is even, then the diagram $ (m,\bigtriangledown_{1} ^{m}) $ is obtained. Now, by recursively removing the vertical domino rim-hooks from two consecutive rows $ \{i ,i+1 \} $, $ i \in \{1, \dots, m-1 \}$, we eventually get the triangular diagram $ \bigtriangledown_{1}^{m-1} $. Then, by Lemma \ref{Triangle-Remove}, $ {\bm \lambda} $ is again indestructible.
}
\end{example}

\begin{remark}\label{reversing}
\textnormal{
By reversing the procedure described in Lemma \ref{Triangle-Remove}, we obtain one possibility to construct all indestructible diagrams. We just start with a triangular diagram and recursively add dominoes in such a way that at each step, the newly added domino becomes a domino rim-hook for the obtained diagram. Notice that by applying this procedure to a single vertical or horizontal domino as a starting point, one can also build all possible destructible diagrams.
%Notice that this same procedure is applied to a single vertical or horizontal domino as a starting point, then we will build all possible destructible diagrams.
}
\end{remark}
Another possibility to obtain all (in)destructible diagrams will be given in the next section.

\section{\textbf{Two rules which preserve the (in)destructibility of diagrams}}
\noindent
In this section, we state and prove two main rules about destruction diagrams.
If $ D $ is a horizontal (respectively, vertical) domino, then we define its \textit{transpose} to be a vertical (respectively, horizontal) domino.
\begin{corollary}[First rule]\label{Rule 1}
Suppose a diagram $ {\bm \lambda}' $ is obtained from diagram $ {\bm \lambda} $ by removing a domino rim-hook $ D $ and placing it \emph{(}or its transpose\emph{)} to any other row, including the new one after the last row, or to any two consecutive rows with the same lengths, including two new ones after the last row, of $ {\bm \lambda} $. Then $ {\bm \lambda} $ is destructible if and only if $ {\bm \lambda}' $ is destructible.
\end{corollary}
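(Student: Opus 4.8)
The plan is to derive this rule as a direct corollary of Lemma \ref{dest. domino}, by exhibiting a single diagram to which \emph{both} $ {\bm \lambda}$ and $ {\bm \lambda}'$ reduce after deleting one domino rim-hook. First I would record the structural identity that underlies everything: writing $D'$ for the re-inserted domino (either $D$ itself or its transpose), the diagram $ {\bm \lambda}'$ is by construction obtained from $ {\bm \lambda}\setminus D$ by appending $D'$ at the prescribed place, so that
\begin{equation*}
{\bm \lambda}'\setminus D' \;=\; {\bm \lambda}\setminus D.
\end{equation*}
Since removing two cells and adding two cells leaves the cardinality unchanged, $|{\bm \lambda}'|=|{\bm \lambda}|$ is again even, so Lemma \ref{dest. domino} is applicable to both diagrams.

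Granting that $D'$ is a domino rim-hook of $ {\bm \lambda}'$, the conclusion is then immediate. By Lemma \ref{dest. domino} applied to $ {\bm \lambda}$ with the rim-hook $D$, the diagram $ {\bm \lambda}$ is destructible if and only if $ {\bm \lambda}\setminus D$ is destructible; by the same lemma applied to $ {\bm \lambda}'$ with the rim-hook $D'$, the diagram $ {\bm \lambda}'$ is destructible if and only if $ {\bm \lambda}'\setminus D'={\bm \lambda}\setminus D$ is destructible. Chaining these two equivalences through the common middle term $ {\bm \lambda}\setminus D$ shows that $ {\bm \lambda}$ is destructible exactly when $ {\bm \lambda}'$ is.

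The one step that requires genuine (if brief) checking --- and the main obstacle --- is to confirm that the re-inserted domino $D'$ really is a domino rim-hook of $ {\bm \lambda}'$, which is precisely the hypothesis needed to invoke Lemma \ref{dest. domino} on $ {\bm \lambda}'$. Here I would argue straight from the definition of a rim-hook: a two-cell edge-connected chain is a domino rim-hook of a diagram exactly when deleting it leaves a valid shape. Because $ {\bm \lambda}'$ is assumed to be a diagram and $ {\bm \lambda}'\setminus D'={\bm \lambda}\setminus D$ is a valid shape (being the original diagram with the rim-hook $D$ removed), I only need to see that the permitted placements force $D'$ to consist of boundary cells. If $D'$ is laid horizontally at the right end of a row $r$ (so $ {\bm \lambda}'_{r}=({\bm \lambda}\setminus D)_{r}+2$), its two cells are the rightmost of that row and their deletion returns $ {\bm \lambda}\setminus D$; if $D'$ is laid vertically across two consecutive rows of equal length in $ {\bm \lambda}\setminus D$, then in $ {\bm \lambda}'$ those rows still have equal length, so the two cells of $D'$ are the last cells of their rows and their deletion again returns $ {\bm \lambda}\setminus D$ (one checks that the row below automatically satisfies the validity inequality, since its length is at most that of the rows carrying $D'$ in $ {\bm \lambda}\setminus D$). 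The boundary cases of appending a new length-$2$ row, or two new length-$1$ rows, after the last row are handled identically. Thus in every placement allowed by the statement $D'$ is a domino rim-hook of $ {\bm \lambda}'$, and the double application of Lemma \ref{dest. domino} completes the argument.
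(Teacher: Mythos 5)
Your argument is correct and is essentially the paper's own proof: both identify $ {\bm \lambda}'\setminus D'={\bm \lambda}\setminus D $, observe that $ D' $ is therefore a domino rim-hook of $ {\bm \lambda}' $, and conclude by applying Lemma \ref{dest. domino} to both diagrams. You merely spell out the verification that $ D' $ is a rim-hook in more detail than the paper, which simply asserts it.
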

\begin{proof}
Denote the transferred domino by $ D' \subseteq {\bm \lambda}' $. Notice that $ {\bm \lambda} \setminus D={\bm \lambda}' \setminus D' $. Hence $ D' $ is a domino rim-hook of $ {\bm \lambda}' $ and Lemma \ref{dest. domino} finishes the proof.
\end{proof}

\begin{example}
\textnormal{
Consider a horizontal domino rim-hook $ D $ in the first row of $ {\bm \lambda}=(7,5,3,3,2) $. Applying the first rule can change $ {\bm \lambda} $ into one of the following diagrams
\begin{align*}
{\bm \lambda}^{'}&=(6,6,3,3,2), &{\bm \lambda}^{'}&=(5,5,5,3,2),  &{\bm \lambda}^{'}&=(5,5,4,4,2),\\
 {\bm \lambda}^{'}&=(5,5,3,3,2,2), & {\bm \lambda}^{'}&=(5,5,3,3,2,1,1).
\end{align*}
}
\end{example}

For any diagram $ {\bm \lambda} $, its \textit{transpose}, $ {\bm \lambda}^{T}=(\gamma_{1}, \dots, \gamma_{s}) $ is a diagram where $ \gamma_{i} $ is the length of the $ i $'th column of $ {\bm \lambda} $, that is, $ {\bm \lambda}^{T} $ is a diagram obtained from $ {\bm \lambda} $ by interchanging the rows and columns of $ {\bm \lambda} $. It is easy to see that $ \gamma_{i}=|\{ j \colon \lambda_{j} \geq i \}| $.
 %is the cardinality of the set
%$ \{ j \colon \lambda_{j} \geq i \} $.
 Therefore the partition determined by $ {\bm \lambda}^{T} $ is the conjugate partition of the one determined by $ {\bm \lambda} $.
Now, we have the second rule:
\begin{lemma}[Second rule]\label{Rule 2}
A diagram $ {\bm \lambda} $ is destructible if and only if its transpose, $ {\bm \lambda}^{T} $, is destructible.
\end{lemma}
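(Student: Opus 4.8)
The plan is to prove the following stronger, more symmetric statement: transposition carries every recursive procedure for $ {\bm \lambda} $ to a recursive procedure for $ {\bm \lambda}^{T} $. Since transposition is an involution ($ ({\bm \lambda}^{T})^{T}={\bm \lambda} $), this yields both implications of the equivalence at once. First I would fix the right viewpoint: a diagram is just a finite set of cells $ C_{ij} $ closed under moving up and to the left, and transposition is the reflection $ \Phi\colon C_{ij}\mapsto C_{ji} $ across the main diagonal. Since $ C_{ij}\in {\bm \lambda}^{T} $ if and only if $ C_{ji}\in {\bm \lambda} $ (this is the relation $ \gamma_{i}=|\{ j \colon \lambda_{j}\geq i\}| $ recorded above), $ \Phi $ is a bijection between the cells of $ {\bm \lambda} $ and those of $ {\bm \lambda}^{T} $, and it sends valid diagrams to valid diagrams, this last being exactly the fact, already noted in the text, that the conjugate of a partition is again a partition.

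The key elementary observation, which I would isolate as the heart of the argument, is that transposition sends a domino rim-hook of any diagram to a domino rim-hook of the transposed diagram, compatibly with removal. Concretely, let $ {\bm \mu} $ be a diagram and $ D $ a domino rim-hook of $ {\bm \mu} $, so that $ D $ is a horizontal or vertical domino and $ {\bm \mu}\setminus D $ is a valid diagram. Writing $ D^{T}:=\Phi(D) $ for the reflected domino, which is a vertical, respectively horizontal, domino, in agreement with the notion of transpose of a domino introduced before Corollary \ref{Rule 1}, the fact that $ \Phi $ is a set bijection gives
$$ {\bm \mu}^{T}\setminus D^{T}=\Phi({\bm \mu})\setminus\Phi(D)=\Phi({\bm \mu}\setminus D)=({\bm \mu}\setminus D)^{T}. $$
Since $ {\bm \mu}\setminus D $ is a valid diagram and $ \Phi $ preserves validity, the right-hand side is a valid diagram; hence $ D^{T} $ is a domino rim-hook of $ {\bm \mu}^{T} $, and $ {\bm \mu}^{T}\setminus D^{T}=({\bm \mu}\setminus D)^{T} $.

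With this in hand the lemma is immediate. Suppose $ {\bm \lambda} $ is destructible, and let
$$ R_{{\bm \lambda}}=\{ {\bm \lambda}={\bm \lambda}^{(0)}\to{\bm \lambda}^{(1)}\to\dots\to{\bm \lambda}^{(n)}=(0)\} $$
be a recursive procedure, where the step $ {\bm \lambda}^{(i-1)}\to{\bm \lambda}^{(i)} $ deletes a domino rim-hook $ D_{i} $ of $ {\bm \lambda}^{(i-1)} $. Applying the observation displayed above to $ {\bm \mu}={\bm \lambda}^{(i-1)} $ and $ D=D_{i} $ at each step shows that $ D_{i}^{T} $ is a domino rim-hook of $ ({\bm \lambda}^{(i-1)})^{T} $ and that $ ({\bm \lambda}^{(i-1)})^{T}\setminus D_{i}^{T}=({\bm \lambda}^{(i)})^{T} $. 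Hence
$$ R_{{\bm \lambda}^{T}}=\{ {\bm \lambda}^{T}=({\bm \lambda}^{(0)})^{T}\to({\bm \lambda}^{(1)})^{T}\to\dots\to({\bm \lambda}^{(n)})^{T}=(0)\} $$
is a legitimate recursive procedure, so $ {\bm \lambda}^{T} $ is destructible. The converse follows by applying the same argument to $ {\bm \lambda}^{T} $ and using $ ({\bm \lambda}^{T})^{T}={\bm \lambda} $. There is essentially no hard step here: the only point needing care is the compatibility of transposition with domino-rim-hook removal, namely that $ \Phi $ both respects set difference and preserves validity of diagrams; everything else is formal. Alternatively, one could avoid exhibiting the whole procedure and instead induct on $ n=\tfrac12|{\bm \lambda}| $, peeling off one domino rim-hook $ D $, transposing, and invoking Lemma \ref{dest. domino} on both $ {\bm \lambda} $ and $ {\bm \lambda}^{T} $; but the direct transposition of the entire procedure is cleaner.
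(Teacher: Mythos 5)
Your proposal is correct and follows essentially the same route as the paper, which only sketches the argument: transposition swaps horizontal and vertical dominoes, commutes with removal (so $({\bm \lambda}\setminus D)^{T}={\bm \lambda}^{T}\setminus D^{T}$), and sends domino rim-hooks to domino rim-hooks, whence any recursive procedure transposes to one for $ {\bm \lambda}^{T} $. Your write-up simply fills in the details the paper leaves implicit (and avoids the paper's typo in the displayed identity).
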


\begin{proof}[Sketch of the proof]
A horizontal (respectively, vertical) domino of $ {\bm \lambda} $ is a vertical (respectively, horizontal) domino of $ {\bm \lambda}^{T} $.
Moreover, for any rim-hook $ D $ of $ {\bm \lambda} $, we have $({\bm \lambda} \setminus D)^T= ({\bm \lambda} \setminus D)^T$ and $ D^{T} $, the transpose of $ D $, is a domino rim-hook of $ {\bm \lambda}^{T} $.
\end{proof}

%%%%%%%%%%%%%%%%%%%%%%%%%%%%%%%%%%%%%%%%%%%%%
%%%%%%%%%%%%%%%%%%%%%%%%%%%%%%%%%%%%%%%%%%%%%

Now, as we mentioned in the previous section, using the above two rules, other possibilities are given to obtain all (in)destructible diagrams.
\begin{lemma}\label{single row}
Let $ {\bm \lambda} $ be a diagram with $ | {\bm \lambda} |=2n$. Then $ {\bm \lambda} $ is destructible if and only if the diagrams $ {\bm \lambda} $ and $ (2n) $ can be transformed to each other by applying the above two rules. Moreover, $ {\bm \lambda} $ is indestructible if and only if there exists an integer $ m > 1 $ such that the diagrams $ {\bm \lambda} $ and \mbox{$ (2n-\frac{m(m+1)}{2},\bigtriangledown_{1} ^{m}) $} can be transformed to each other by applying the above two rules.
\end{lemma}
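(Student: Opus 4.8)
The plan is to treat the two equivalences separately and to reduce everything to a single closure property of rule-equivalence. Write $\bm{\mu}\sim\bm{\nu}$ to mean that two diagrams of the same even size can be transformed into each other by the First and Second rules. The two ``if'' directions are immediate: by Corollary \ref{Rule 1} and Lemma \ref{Rule 2} the relation $\sim$ preserves (in)destructibility, while $(2n)$ is destructible and, by Example \ref{Initial pattern}, every diagram $(2n-\tfrac{m(m+1)}{2},\bigtriangledown_1^m)$ with $m>1$ is indestructible. Hence $\bm{\lambda}\sim(2n)$ forces $\bm{\lambda}$ destructible, and $\bm{\lambda}\sim(2n-\tfrac{m(m+1)}{2},\bigtriangledown_1^m)$ forces $\bm{\lambda}$ indestructible.

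For the two ``only if'' directions I would argue by induction on $n$, handling both cases in parallel. If $\bm{\lambda}$ is destructible it is not triangular (Corollary \ref{Tr3}); if $\bm{\lambda}$ is indestructible and not already triangular, it still admits a domino rim-hook $D$ by Lemma \ref{Tr2}. In the indestructible case $\bm{\mu}:=\bm{\lambda}\setminus D$ stays indestructible by Lemma \ref{dest. domino}, and in the destructible case it stays destructible. Applying the First rule to move $D$ to the end of the top row gives $\bm{\lambda}\sim\bm{\mu}^{+}$, where for $\bm{\rho}=(\rho_1,\rho_2,\dots)$ I write $\bm{\rho}^{+}=(\rho_1+2,\rho_2,\dots)$ for the result of appending a horizontal domino to its first row. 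By the induction hypothesis $\bm{\mu}$ is rule-equivalent to its canonical form $\bm{\kappa}$ (namely $(2n-2)$, respectively $(2(n-1)-\tfrac{m(m+1)}{2},\bigtriangledown_1^m)$ with $m>1$ supplied by the terminal triangular diagram of Lemma \ref{Triangle-Remove}); since $\bm{\kappa}^{+}$ is exactly the size-$2n$ canonical form, it remains only to pass from $\bm{\mu}\sim\bm{\kappa}$ to $\bm{\mu}^{+}\sim\bm{\kappa}^{+}$. The triangular base case $\bm{\lambda}=\bigtriangledown_1^m$, where the canonical form degenerates to $\bm{\lambda}$ itself, is immediate.

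The heart of the argument is therefore the closure property: $\bm{\mu}\sim\bm{\nu}$ implies $\bm{\mu}^{+}\sim\bm{\nu}^{+}$, which it suffices to verify across a single rule application. When $\bm{\nu}$ arises from $\bm{\mu}$ by the First rule, $\bm{\mu}$ and $\bm{\nu}$ share a common remainder $\bm{\rho}=\bm{\mu}\setminus D=\bm{\nu}\setminus D'$; one checks that deleting the appropriate domino from $\bm{\mu}^{+}$ (the image of $D$, shifted to the new last two cells if $D$ lay in the first row) leaves precisely $\bm{\rho}^{+}$, and likewise for $\bm{\nu}^{+}$, so $\bm{\mu}^{+}$ and $\bm{\nu}^{+}$ again share the remainder $\bm{\rho}^{+}$ and the First rule applies. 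The subtle case is the Second rule, $\bm{\nu}=\bm{\mu}^{T}$: transposing $\bm{\mu}^{+}$ yields $(\bm{\mu}^{+})^{T}=(\bm{\mu}^{T},1,1)$, i.e. $\bm{\mu}^{T}$ with a vertical domino appended as two new bottom rows, whereas $\bm{\nu}^{+}=(\bm{\mu}^{T})^{+}$ is $\bm{\mu}^{T}$ with a horizontal domino on its top row. Both are $\bm{\mu}^{T}$ with a single extra domino rim-hook attached, so the First rule identifies them, and chaining gives $\bm{\mu}^{+}\sim(\bm{\mu}^{+})^{T}=(\bm{\mu}^{T},1,1)\sim(\bm{\mu}^{T})^{+}=\bm{\nu}^{+}$.

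I expect this transpose case to be the main obstacle, because ``appending a domino to the top row'' does not commute with transposition on the nose—it becomes ``appending a domino at the bottom''—and reconciling the two genuinely requires the flexibility of the First rule to reposition a lone domino rim-hook. The remaining bookkeeping (checking that each one-domino augmentation is a valid diagram and a genuine rim-hook, and that $\bm{\kappa}^{+}$ coincides with the asserted canonical forms for both the destructible and indestructible targets) is routine.
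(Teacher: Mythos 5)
Your overall strategy (induction on $n$, normalizing $\bm\lambda$ to $\bm\mu^{+}$ with $\bm\mu=\bm\lambda\setminus D$, and a closure lemma ``$\bm\mu\sim\bm\nu\Rightarrow\bm\mu^{+}\sim\bm\nu^{+}$'') is genuinely different from the paper's proof, which instead runs a greedy procedure: repeatedly transfer every domino rim-hook living in rows $2,3,\dots$ up to the first row until either everything has been absorbed into $(2n)$ or the lower rows freeze into a triangle $\bigtriangledown_1^{m}$, with the case $m=1$ disposed of by one transposition. Your route is viable and arguably cleaner in that it isolates exactly one lemma to check, but it has a concrete gap precisely where you did not expect it: not in the transpose case (which you handle correctly --- $(\bm\mu^{+})^{T}=(\bm\mu^{T},1,1)\sim(\bm\mu^{T})^{+}$ is a legitimate single application of the First rule), but in the First-rule case of the closure property.

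The problem is the parenthetical recipe ``the image of $D$, shifted to the new last two cells if $D$ lay in the first row.'' This works when $D$ is horizontal in row $1$ (then $\bm\rho^{+}=\bm\mu$ and removing the appended domino from $\bm\mu^{+}$ indeed returns $\bm\mu$), and when $D$ avoids row $1$ altogether. It fails when $D$ is a \emph{vertical} domino rim-hook occupying rows $1$ and $2$. There $\bm\mu=(\mu_1,\mu_1,\mu_3,\dots)$, $\bm\rho=(\mu_1-1,\mu_1-1,\mu_3,\dots)$, so $\bm\rho^{+}=(\mu_1+1,\mu_1-1,\mu_3,\dots)$ while $\bm\mu^{+}=(\mu_1+2,\mu_1,\mu_3,\dots)$; the two cells of $\bm\mu^{+}\setminus\bm\rho^{+}$ sit at positions $(1,\mu_1+2)$ and $(2,\mu_1)$, which are not edge-adjacent, so \emph{no} single domino removal from $\bm\mu^{+}$ produces $\bm\rho^{+}$, and shifting to the last two cells of row $1$ gives $\bm\mu\neq\bm\rho^{+}$. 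Hence ``the First rule applies'' in one step is false here (and symmetrically when $D'$ is vertical in rows $1$--$2$ of $\bm\nu$). The gap is reparable by a short chain: first move the appended domino of $\bm\mu^{+}$ to the vertical position in rows $1,2$ to get $\bm\sigma=(\mu_1+1,\mu_1+1,\mu_3,\dots)$, observe that the last two cells of row $2$ of $\bm\sigma$ form a domino rim-hook with $\bm\sigma\setminus H=\bm\rho^{+}$, and move $H$ to the top row to reach $\bm\rho^{++}$, to which $\bm\nu^{+}$ is likewise connected. Without some such argument the closure lemma, and with it your induction, is incomplete.
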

\begin{proof}
Using Corollary \ref{Rule 1} we can transfer recursively horizontal/vertical domino rim-hooks that are in rows $ 2, 3, \dots $ and place them horizontally to the first row. If all the dominoes from other rows can be transferred to the first row, then the obtained diagram is $ (2n) $. But, if after some steps there is no way to transfer domino rim-hooks from other rows to the first row, then there is no way to remove domino rim-hooks from those rows. In this case, by Lemma \ref{Tr2} there exists an integer $ m $ such that the obtained diagram is of the form $ (2n-\frac{m(m+1)}{2},\bigtriangledown_{1} ^{m}) $.

If $ m=1 $, the obtained diagram is $ (2n-1,1) $, so that by using the second rule, we transpose into $ (2n-1,1)^{T}=(2,1^{2n-2}) $. Then using Corollary \ref{Rule 1}, after $ n-1 $ times removing the vertical domino rim-hooks from the first column and placing them horizontally to the first row, we again obtain the diagram $ (2n) $. Since this is clearly destructible and the two rules preserves destructibility, our initial diagram $ {\bm \lambda} $ is also destructible.

If $ m > 1 $, then, by Example \ref{Initial pattern}, the obtained diagram $ (2n-\frac{m(m+1)}{2},\bigtriangledown_{1} ^{m}) $ is indestructible. Since the first rule preserves indestructibility, our initial diagram $ {\bm \lambda} $ is also indestructible.

By reversing the above procedure, we can transform the diagram $ (2n) $ to any destructible one, and can transform the diagrams $ (2n-\frac{m(m+1)}{2},\bigtriangledown_{1} ^{m}) $, for suitable $ m > 1 $, to any indestructible one.
\end{proof}

\begin{corollary}
Let $ {\bm \lambda}$ and $ {\bm \gamma} $ be two destructible diagram with $ | {\bm \lambda} |=| {\bm \gamma}|$. Then diagrams $ {\bm \lambda} $ and $ {\bm \gamma} $ can be obtained from each other by applying the above two rules.
\end{corollary}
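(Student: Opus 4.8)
The plan is to funnel both diagrams through the common normal form $(2n)$ supplied by Lemma~\ref{single row}, and then exploit the fact that the two rules generate a symmetric relation on diagrams of a fixed size. Since $ {\bm \lambda} $ and $ {\bm \gamma} $ are destructible, every cell is removed in disjoint dominoes, so their common size is even; write $ |{\bm \lambda}|=|{\bm \gamma}|=2n $.

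First I would apply Lemma~\ref{single row} to $ {\bm \lambda} $: because it is destructible with $ 2n $ cells, there is a finite sequence of applications of the first and second rules carrying $ {\bm \lambda} $ to the single-row diagram $ (2n) $. The very same lemma applied to $ {\bm \gamma} $ produces a second such sequence carrying $ {\bm \gamma} $ to $ (2n) $. It then remains only to splice these two sequences together at $ (2n) $.

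To do this I would observe that each rule is reversible. The first rule, Corollary~\ref{Rule 1}, is stated as a biconditional in which the source and target positions of the transferred domino play interchangeable roles, so moving a domino rim-hook can always be undone by moving it back; the second rule, Lemma~\ref{Rule 2}, is its own inverse because $ ({\bm \lambda}^{T})^{T}={\bm \lambda} $. Hence the relation ``obtainable from one another by the two rules'' is symmetric (and transitive, since a composition of rule-applications is again a sequence of rule-applications). Consequently the second sequence may be reversed to carry $ (2n) $ back to $ {\bm \gamma} $, and concatenating gives the transformation $ {\bm \lambda} \to (2n) \to {\bm \gamma} $, as required.

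The only point that needs any care — and it is barely an obstacle — is precisely this reversibility, ensuring that $ (2n) $ can be reached from $ {\bm \lambda} $ and then departed toward $ {\bm \gamma} $. This is already encoded in the ``if and only if'' formulations of Corollary~\ref{Rule 1} and Lemma~\ref{Rule 2}, and in the phrase ``can be transformed to each other'' in Lemma~\ref{single row}, so no additional work is needed beyond invoking those statements.
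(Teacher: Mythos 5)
Your proof is correct and is essentially the argument the paper intends: the corollary is stated without proof as an immediate consequence of Lemma \ref{single row}, namely that both destructible diagrams can be transformed to the common normal form $(2n)$ and the rules are reversible. Your explicit attention to reversibility of the two rules is a reasonable (if minor) addition to what the paper leaves implicit.
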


We remark that the above corollary does not always hold in indestructible case.
For example, it can be easily checked that two indestructible diagrams $ {\bm \lambda}=(6,3,2,1)=(6,\bigtriangledown_{1} ^{3}) $ and $ {\bm \gamma}=(9,2,1)=(9,\bigtriangledown_{1} ^{2}) $ with $ | {\bm \lambda} |=| {\bm \gamma}|=12 $ cannot be changed from one to another.
%We remark that the integer $ m $ from Lemma \ref{single row} may well depend on the shape of indestructible diagram $ {\bm \lambda} $ even though it contains the same number of cells. 
%On the other extreme, 
However, it may still hold in some cases. For example, the two indestructible diagrams $ {\bm \lambda}=(6,4,3,2,1)=(6,\bigtriangledown_{1} ^{4}) $ and $ {\bm \gamma}=(13,2,1)=(13,\bigtriangledown_{1} ^{2}) $ with $ | {\bm \lambda} |=| {\bm \gamma}|=16 $ can be changed from one to another as follows (first we transpose, and then we transfer domino rim-hooks to the first row):
$$ (6,\bigtriangledown_{1} ^{4}) \leftrightarrow (\bigtriangledown_{1} ^{5},1) \leftrightarrow (7,4,3,2) \leftrightarrow (9,4,3) \leftrightarrow (11,4,1) \leftrightarrow (13,2,1). $$

%%%%%%%%%%%%%%%%%%%%%%%%%%%%%%
%%%%%%%%%%%%%%%%%%5555%%%%%%%%%%%%
%%%%%%%%%%%%%%%%%%%%%%%%%%%%%%%%%%%%
%%%%%%%%%%%%%%%%%%%%%%%%%%%%%%%%%%%%%%%%
%%%%%%%%%%%%%%%%%%%%%%%%%%%%%%%%%%%%%%%%%
%%%%%%%%%%%%%%%%%%%%%%%%%%%%%%%%%%%%%%%%%%%
%%%%%%%%%%%%%%%%%%%%%%%%%%%%%%%%%%%%%%%%%%%%%

\section{\textbf{Characters induced by (in)destructible diagrams}}
\noindent
In this section we prove that the characters induced by indestructible diagram vanish identically on $ P_{2n} \subseteq S_{2n} $, and the characters induced by destructible diagram are always nonzero on $ P_{2n} $ (recall that $ P_{2n} $ denotes the set of permutations from $ S_{2n} $ whose cycle type contains only cycles of even length).

Our main tool will be the recursive \textit{Murnaghan-Nakayama rule} (see \cite[Theorem 4.10.2]{Sa} and {\cite[Corollary 7.2]{St}}, or \cite[Problem 4.45]{Ful}) which is a combinatorial way of computing the value of the irreducible characters $ \chi^{{\bm \lambda}} $. The crucial objects that come into play are the rim-hooks. The \textit{length} of a rim-hook $ {\bm {\bm \zeta}} $ is the number of cells in $ {\bm {\bm \zeta}} $. 
The \textit{leg length} of $ {\bm {\bm \zeta}} $ is $ ll({\bm {\bm \zeta}}) $=(the number of rows of $ {\bm {\bm \zeta}} $) - 1. 
Let us state Murnaghan-Nakayama rule for convenience.
\begin{theorem}\textnormal{(\cite[Theorem 4.10.2]{Sa} and {\cite[Corollary 7.2]{St}})}\label{Murnaghan-Nakayama Rule}
If $ {\bm \lambda} $ is a diagram with $ |{\bm \lambda}|=n $ and $ \rho=\rho_{1}\dots\rho_{k} $ is a cycle decomposition of a permutation $ \rho \in S_{n} $ into disjoint cycles of decreasing lengths, then we have
\begin{equation*}\label{M.N.R}
\chi^{{\bm \lambda}}({\rho})=\sum_{{\bm {\bm \zeta}}}(-1)^{ll({\bm {\bm \zeta}})}\chi^{{\bm \lambda} \backslash {\bm {\bm \zeta}}}({\rho \backslash \rho_{1}}),
\end{equation*}
where the sum runs over all rim-hooks $ {\bm {\bm \zeta}} $ of $ {\bm \lambda} $ with $ |\rho_{1}| $ cells. If $ {\bm \lambda} $ contains no rim-hooks of length $ |\rho_{1}| $, then $ \chi^{{\bm \lambda}}({\rho})=0 $.
\end{theorem}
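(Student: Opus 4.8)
The plan is to deduce the rule from the theory of symmetric functions through the Frobenius characteristic, rather than directly from the representation theory of $S_n$. The first ingredient I would invoke is the Frobenius formula expanding the power-sum symmetric functions in the Schur basis: for every partition $\mu$ of $n$,
$$ p_{\mu}=\sum_{{\bm \lambda}\vdash n}\chi^{{\bm \lambda}}(\mu)\,s_{{\bm \lambda}}, $$
where $\chi^{{\bm \lambda}}(\mu)$ is the value of $\chi^{{\bm \lambda}}$ on the class of cycle type $\mu$ and $s_{{\bm \lambda}}$ is the Schur function. Equivalently, the Frobenius characteristic map sends $\chi^{{\bm \lambda}}$ to $s_{{\bm \lambda}}$ and is an isometry onto the space of degree-$n$ symmetric functions; this turns the evaluation of character values into the extraction of Schur coefficients.

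The second, and main, ingredient is a Pieri-type rule for multiplication by a single power sum: for any partition ${\bm \nu}$ and any integer $r\geq 1$,
$$ p_{r}\,s_{{\bm \nu}}=\sum_{{\bm \zeta}}(-1)^{ll({\bm \zeta})}\,s_{{\bm \nu}\cup{\bm \zeta}}, $$
the sum ranging over all rim-hooks ${\bm \zeta}$ of length $r$ that can be adjoined to ${\bm \nu}$ so that ${\bm \nu}\cup{\bm \zeta}$ is again a diagram. I would prove this by passing to the adjoint operator $p_{r}^{\perp}$ for the Hall inner product: since $\langle p_{r}f,g\rangle=\langle f,p_{r}^{\perp}g\rangle$, the claim is equivalent to $p_{r}^{\perp}s_{{\bm \mu}}=\sum_{{\bm \zeta}}(-1)^{ll({\bm \zeta})}s_{{\bm \mu}\setminus{\bm \zeta}}$, summed over rim-hooks ${\bm \zeta}$ of length $r$ in ${\bm \mu}$. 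To establish this, I would use the Jacobi--Trudi determinant $s_{{\bm \mu}}=\det(h_{\mu_{i}-i+j})$ together with the relation $p_{r}^{\perp}h_{m}=h_{m-r}$, which follows from the generating-function identity $\sum_{k}h_{k}t^{k}=\exp(\sum_{j}p_{j}t^{j}/j)$; since $p_{r}^{\perp}$ acts as the derivation $r\,\partial/\partial p_{r}$, differentiating the determinant entrywise produces a signed sum of Jacobi--Trudi determinants that, after the standard identification of the leg-length sign with the height of a border strip, collapses to the rim-hook expansion above.

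With these two facts in hand the theorem follows by a short coefficient comparison. Put $r=|\rho_{1}|$ and let $\mu'$ be the cycle type of $\rho\setminus\rho_{1}$; since the cycles of $\rho$ are disjoint, its cycle type $\mu$ satisfies $p_{\mu}=p_{r}\,p_{\mu'}$. Expanding $p_{\mu'}$ by the Frobenius formula and then applying the power-sum Pieri rule gives
$$ \sum_{{\bm \lambda}}\chi^{{\bm \lambda}}(\rho)\,s_{{\bm \lambda}}=p_{r}\sum_{{\bm \nu}}\chi^{{\bm \nu}}(\rho\setminus\rho_{1})\,s_{{\bm \nu}}=\sum_{{\bm \nu}}\chi^{{\bm \nu}}(\rho\setminus\rho_{1})\sum_{{\bm \zeta}}(-1)^{ll({\bm \zeta})}s_{{\bm \nu}\cup{\bm \zeta}}. $$
Regrouping the right-hand side according to the outer diagram ${\bm \lambda}={\bm \nu}\cup{\bm \zeta}$, so that ${\bm \nu}={\bm \lambda}\setminus{\bm \zeta}$ and ${\bm \zeta}$ is a rim-hook of ${\bm \lambda}$ of length $r$, and comparing the coefficient of $s_{{\bm \lambda}}$ on both sides yields
$$ \chi^{{\bm \lambda}}(\rho)=\sum_{{\bm \zeta}}(-1)^{ll({\bm \zeta})}\,\chi^{{\bm \lambda}\setminus{\bm \zeta}}(\rho\setminus\rho_{1}), $$
which is the asserted identity; the final clause is the special case in which no rim-hook of length $r$ exists, so the sum is empty and $\chi^{{\bm \lambda}}(\rho)=0$. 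I expect the only real difficulty to lie in the Pieri-type rule for power sums, specifically in extracting the sign $(-1)^{ll({\bm \zeta})}$ from the determinant expansion; the Frobenius reduction and the final matching of coefficients are then routine.
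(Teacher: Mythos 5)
The paper does not prove this statement at all---it is imported verbatim from the cited references (Sagan, Theorem 4.10.2; Stanley, Corollary 7.2)---so there is no internal proof to compare against. Your argument is correct and is essentially the standard proof given in those sources: the Frobenius characteristic reduces the character identity to the border-strip expansion of $p_{r}\,s_{{\bm \nu}}$, which you correctly derive from the Jacobi--Trudi determinant via $p_{r}^{\perp}h_{m}=h_{m-r}$ and the usual reordering-of-rows sign bookkeeping, and the concluding coefficient comparison (including the empty-sum case) is sound.
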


We will also rely on the following result.
\begin{lemma}\textnormal{\cite[Lemma 7.3]{St}}\label{St2}
Let $ {\bm \lambda} $ be a diagram, and let $ {\bm {\bm \zeta}}={\bm \lambda} \setminus {\bm \gamma} $ be a rim-hook of $ {\bm \lambda} $ with $ |{\bm {\bm \zeta}}|=pk $, where $ p $ and $ k $ are integers. Then there exists a recursive procedure 
$$ \{ {\bm \lambda}={\bm \lambda}^{(0)} \rightarrow {\bm \lambda}^{(1)} \rightarrow \dots \rightarrow {\bm \lambda}^{(k)}={\bm \gamma} \} $$
such that for any $ i \in \{1, \dots, k \} $, $ {\bm \lambda}^{(i-1)} \setminus {\bm \lambda}^{(i)} $ is a rim-hook of $ {\bm \lambda}^{(i-1)} $ with $ |{\bm \lambda}^{(i-1)} \setminus {\bm \lambda}^{(i)}|=p $.
\end{lemma}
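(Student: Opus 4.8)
The plan is to pass to the standard \emph{abacus} (equivalently, beta-set) encoding of diagrams, under which rim-hook removal becomes a transparent bead-sliding operation. Recall that a diagram is encoded by a configuration of beads on the integer line, and that removing a rim-hook of length $ \ell $ is exactly the operation of taking a single bead at some position $ b $ and moving it into the vacant position $ b-\ell $; the displacement $ \ell $ equals the number of cells removed, and every such move again yields the beta-set of a genuine diagram. Under this dictionary the hypothesis reads: the rim-hook $ {\bm \zeta}={\bm \lambda}\setminus{\bm \gamma} $ of size $ pk $ corresponds to one bead travelling from a position $ j $ to the vacant position $ j-pk $. The key observation is that the $ k+1 $ positions $ j,\,j-p,\,\dots,\,j-pk $ all lie in the same residue class modulo $ p $, i.e.\ on one \emph{runner} of the $ p $-abacus, and that moving a bead from a position $ b $ into the vacant position $ b-p $ one slot lower on that runner is precisely the removal of a rim-hook of length $ p $.

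This reduces the statement to a one-dimensional token problem on that single runner. Label its relevant slots $ 0,1,\dots,k $ from top (position $ j $) down to bottom (position $ j-pk $): in the configuration of $ {\bm \lambda} $ slot $ 0 $ carries a bead, slot $ k $ is vacant, and the target configuration (that of $ {\bm \gamma} $) is obtained by emptying slot $ 0 $, filling slot $ k $, and leaving every intermediate slot as it was. I would realise this by an explicit \emph{cascade}: letting $ 0=s_{1}<s_{2}<\dots<s_{m}<k $ be the occupied slots, first slide the lowest bead from $ s_{m} $ down to $ k $ (all slots below $ s_{m} $ being vacant), then slide the bead at $ s_{m-1} $ into the now-vacated $ s_{m} $, and so on, finishing by sliding the bead at $ s_{1}=0 $ into $ s_{2} $. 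Every slide is a succession of single-slot downward moves into vacant slots, hence a succession of length-$ p $ rim-hook removals, and the total number of single-slot moves telescopes to $ (k-s_{m})+(s_{m}-s_{m-1})+\dots+(s_{2}-s_{1})=k-s_{1}=k $. This produces exactly the chain $ {\bm \lambda}={\bm \lambda}^{(0)}\to\dots\to{\bm \lambda}^{(k)}={\bm \gamma} $ required, each arrow deleting a rim-hook of size $ p $; that the intermediate $ {\bm \lambda}^{(i)} $ are genuine diagrams is automatic from the beta-set correspondence.

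The point that needs care — and the reason the proof is not merely ``peel the last $ p $ cells off the head of the ribbon'' — is that a terminal segment of a rim-hook taken from its head need not itself be removable: for instance, in $ {\bm \lambda}=(2,2) $ the three cells outside $ {\bm \gamma}=(1) $ form a rim-hook, yet deleting only its top-right (head) cell leaves the invalid shape $ (1,2) $. The cascade circumvents this by exploiting that beads on a runner are indistinguishable and must be moved \emph{bottom-up}, so that the rim-hook removed first sits at the tail rather than the head. Accordingly, the main work is (i) recording the beta-set dictionary precisely enough that ``move a bead into a lower vacant slot on a fixed runner'' is certified to be a length-$ p $ rim-hook removal, and (ii) the bookkeeping of the cascade guaranteeing that the intermediate slots are restored and that the move count is exactly $ k $; both are routine once the encoding is in place, with (i) being the only genuinely subtle ingredient.
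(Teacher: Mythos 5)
Your argument is correct, but note that the paper itself offers no proof of this lemma: it is quoted verbatim from Stanley \cite[Lemma 7.3]{St}, so there is no in-paper argument to compare against. Your abacus (beta-set) proof is a legitimate self-contained derivation. The dictionary you invoke --- beads at positions $\lambda_i-i$, with ``remove a rim-hook of length $\ell$'' $\leftrightarrow$ ``move a bead from position $b$ into the vacant position $b-\ell$'', every resulting configuration again encoding a genuine diagram --- is standard (James--Kerber), and once it is granted, your cascade on the single runner through $j, j-p,\dots,j-pk$ is airtight: each single-slot slide is into a slot that is vacant either originally or because the previous stage vacated it, each such slide is a length-$p$ rim-hook removal, the move count telescopes to $k$, and the terminal bead configuration is exactly that of $\bm\gamma$ since beads are indistinguishable. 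Your warning that one cannot simply chop the ribbon into consecutive length-$p$ segments (the $(2,2)/(1)$ example) correctly identifies the point where a naive argument fails and why the bottom-up cascade is needed. The only thing I would ask you to make explicit in a final write-up is the one ingredient you flag yourself: a precise statement (with citation or short proof) of the bead-move/rim-hook dictionary, since the entire argument rests on it; as written it is asserted rather than established. With that reference supplied, the proof is complete.
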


\begin{theorem}\label{Main1}
The character induced by indestructible diagram $ {\bm \lambda} $ with $ | {\bm \lambda} |=2n $ vanishes identically on $ P_{2n} $.
\end{theorem}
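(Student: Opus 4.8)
The plan is to argue by strong induction on $ n $, peeling off the largest cycle of a permutation $ \rho \in P_{2n} $ via the Murnaghan--Nakayama rule (Theorem \ref{Murnaghan-Nakayama Rule}) and exploiting that \emph{every} cycle of such a $ \rho $ has even length. Fix an indestructible diagram $ {\bm \lambda} $ with $ |{\bm \lambda}|=2n $ and write $ \rho=\rho_{1}\cdots\rho_{k}\in P_{2n} $ with cycles of decreasing even lengths. If $ {\bm \lambda} $ has no rim-hook with $ |\rho_{1}| $ cells, then Theorem \ref{Murnaghan-Nakayama Rule} gives $ \chi^{{\bm \lambda}}(\rho)=0 $ at once. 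This branch also disposes of the smallest indestructible cases: for the triangular diagram $ (3,2,1) $, for instance, there is no rim-hook of even length $ 2 $, $ 4 $, or $ 6 $ (none of length $ 2 $ by Lemma \ref{Tr2}, and none of length $ 4 $ or $ 6 $ by inspection), so $ \chi^{(3,2,1)} $ vanishes on all of $ P_{6} $ directly, furnishing the base of the induction.

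In the remaining case I would write the Murnaghan--Nakayama expansion
$$ \chi^{{\bm \lambda}}(\rho)=\sum_{{\bm \zeta}}(-1)^{ll({\bm \zeta})}\chi^{{\bm \lambda}\setminus{\bm \zeta}}(\rho\setminus\rho_{1}), $$
summed over all rim-hooks $ {\bm \zeta} $ of $ {\bm \lambda} $ with $ |{\bm \zeta}|=|\rho_{1}| $. The decisive observation is that $ |\rho_{1}| $ is even, so each $ {\bm \zeta} $ has even length; by Lemma \ref{St2} with $ p=2 $ it can be stripped off through a chain of single domino rim-hook removals. Applying Lemma \ref{dest. domino} at each step of this chain (every intermediate diagram has even size, so the lemma applies throughout) shows that $ {\bm \lambda} $ is destructible if and only if $ {\bm \lambda}\setminus{\bm \zeta} $ is; since $ {\bm \lambda} $ is indestructible, so is $ {\bm \lambda}\setminus{\bm \zeta} $.

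I would then note that $ |{\bm \lambda}\setminus{\bm \zeta}|=2n-|\rho_{1}| $ is even and strictly smaller than $ 2n $, and that $ \rho\setminus\rho_{1}\in P_{2n-|\rho_{1}|} $, since deleting one even cycle leaves a permutation all of whose cycles are still even. Hence every summand $ \chi^{{\bm \lambda}\setminus{\bm \zeta}}(\rho\setminus\rho_{1}) $ is the value of a character of a strictly smaller indestructible diagram on a permutation from the corresponding set $ P $, so it vanishes by the induction hypothesis, whence $ \chi^{{\bm \lambda}}(\rho)=0 $. One should also record that an indestructible diagram is never itself a single rim-hook of even length (Lemma \ref{St2} would otherwise make it destructible), so the degenerate case $ {\bm \zeta}={\bm \lambda} $ cannot occur and each $ {\bm \lambda}\setminus{\bm \zeta} $ is a genuine nonempty diagram to which the induction applies.

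The heart of the argument, and the step I expect to require the most care, is the middle one: seeing that the evenness of the cycles of $ \rho\in P_{2n} $ forces the Murnaghan--Nakayama rim-hooks to have even length, which is exactly what allows Lemma \ref{St2} to break them into dominoes and Lemma \ref{dest. domino} to propagate indestructibility downward to $ {\bm \lambda}\setminus{\bm \zeta} $. Once this link is established the induction closes cleanly; the remaining tasks---checking the parity of all intermediate diagrams and verifying the base case---are routine.
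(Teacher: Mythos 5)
Your proof is correct, and it rests on exactly the same two pillars as the paper's: the Murnaghan--Nakayama rule forces every stripped rim-hook to have the even length $|\rho_1|$, and Lemma \ref{St2} with $p=2$ breaks such a rim-hook into a chain of domino rim-hook removals. The difference is organizational. The paper does not induct: it expands the character fully down all branches of the Murnaghan--Nakayama recursion, kills the branches that die early, and then observes that any surviving complete branch would exhibit a domino-by-domino destruction of $ {\bm \lambda} $, contradicting indestructibility. You instead isolate the cleaner intermediate fact that removing an even-length rim-hook from an indestructible diagram leaves an indestructible diagram (via Lemma \ref{St2} followed by repeated application of Lemma \ref{dest. domino}), and then close with strong induction on $n$. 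Your version localizes the argument to a single level of the recursion and avoids the bookkeeping of the paper's primed and double-primed sums; the paper's version avoids stating the propagation lemma explicitly. Your attention to the degenerate case $ {\bm \zeta}={\bm \lambda} $ (impossible because a hook of even size is destructible by Lemma \ref{St2}) is a detail the paper glosses over but that your inductive formulation genuinely needs, and you handle it correctly. The only cosmetic quibble is that your ``base case'' $(3,2,1)$ is not really the base of the induction --- the induction bottoms out vacuously at $|{\bm \lambda}|\in\{2,4\}$, where no indestructible diagrams exist, or in the branch where no rim-hook of length $|\rho_1|$ exists --- but this does not affect correctness.
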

\begin{proof}
Let 
$$ \rho=\rho_{1} \dots \rho_{k} $$
be a cycle decomposition of permutation $ \rho \in P_{2n} $ into disjoint cycles whose orders are even and decreasing. By Murnaghan-Nakayama rule $ \chi^{{\bm \lambda}}({\rho})=\sum_{{\bm {\bm \zeta}}_{1}}(-1)^{ll({\bm {\bm \zeta}}_{1})}\chi^{{\bm \lambda} \backslash {\bm {\bm \zeta}}_{1}}({\rho \backslash \rho_{1}})$, where the sum runs over all rim-hooks $ {\bm {\bm \zeta}}_{1} $ of $ {\bm \lambda} $ with $ |\rho_{1}| $ cells. We will decompose this sum into two parts
\begin{equation*}
\chi^{{\bm \lambda}}(\rho)=\sideset{}{'}\sum_{{\bm {\bm \zeta}}_{1}}(-1)^{ll({\bm {\bm \zeta}_{1}})}\chi^{{\bm \lambda} \backslash {\bm {\bm \zeta}}_{1}}({\rho \backslash \rho_{1}}) + \sideset{}{''}\sum_{{\bm {\bm \zeta}}_{1}}(-1)^{ll({\bm {\bm \zeta}}_{1})}\chi^{{\bm \lambda} \backslash {\bm {\bm \zeta}}_{1}}({\rho \backslash \rho_{1}}),
\end{equation*}
where in $ \sum^{'} $ we sum up over all the rim-hooks $ {\bm {\bm {\bm \zeta}}}_{1} \subseteq {\bm \lambda} $ of length $ |\rho_{1}| $ such that the diagram $ {\bm \lambda} \setminus {\bm {\bm \zeta}}_{1} $ contains at least one rim-hook of length $ |\rho_{2}| $, while in $ \sum^{''} $ we sum up over all the rim-hooks of length $ |\rho_{1}| $ without this property. Every summand $ \chi^{{\bm \lambda} \backslash {\bm {\bm \zeta}}_{1}}({\rho \backslash \rho_{1}}) $ in $ \sum^{''} $ vanishes, which is seen by computing it again by Murnaghan-Nakayama rule. Therefore $ \sum^{''} $ vanishes and hence
$
\chi^{{\bm \lambda}}(\rho)=\sideset{}{'}\sum_{{\bm {\bm \zeta}}_{1}}(-1)^{ll({\bm {\bm \zeta}}_{1})}\chi^{{\bm \lambda} \backslash {\bm {\bm \zeta}}_{1}}({\rho \backslash \rho_{1}}).
$

Now, we can recursively continue the process on each summand $\chi^{{\bm \lambda} \backslash {\bm {\bm \zeta}}_{1}}({\rho \backslash \rho_{1}}) $ from $ \sum^{'} $. After $ r $ steps we get
\begin{equation*}
\chi^{{\bm \lambda}}(\rho)=\sum_{{\bm {\bm \zeta}}_{1}, \dots,{\bm {\bm \zeta}}_{r}}(-1)^{ll({\bm {\bm \zeta}}_{1})+ \dots+ll({\bm {\bm \zeta}}_{r})}\chi^{{\bm \lambda} \setminus \bigcup_{i=1}^{r}{\bm {\bm \zeta}}_{i}}(\rho_{r+1}\dots\rho_{k}).
\end{equation*}
where we sum up over all rim-hooks $ {\bm {\bm \zeta}}_{1}, \dots, {\bm {\bm \zeta}}_{r} $ in which $ {\bm {\bm \zeta}}_{i} $ is a rim-hook of $ {\bm \lambda} \setminus \bigcup_{j=1}^{i-1}{\bm {\bm \zeta}}_{j} $ with $| {\bm {\bm \zeta}}_{i} |= |\rho_{i}| $, $ i \in \{1, \dots, r \} $.
If at some step $ r < k $, $ \sum^{'} $ contains no summand, then $ \chi^{{\bm \lambda}}(\rho)=0 $. Otherwise, assume at each recursive step $ \sum^{'} $ contains at least one summand. This means that there is at least one way to recursively remove all the cells from $ {\bm \lambda} $ by removing first a rim-hook $ {\bm {\bm \zeta}}_{1} \subseteq {\bm \lambda} $ with $ | {\bm {\bm \zeta}}_{1} |= |\rho_{1}| $, then removing a rim-hook $ {\bm {\bm \zeta}}_{2} \subseteq {\bm \lambda} $ with $ | {\bm {\bm \zeta}}_{2} |= |\rho_{2}| $, etc. Since $ |\rho_{1}| $ is even, then by Lemma \ref{St2} the first step can be done on $ \dfrac{|\rho_{1}|}{2} $ sub-steps, where at each sub-step we recursively remove a domino rim-hook and always get a valid shape. Since $ |\rho_{2}| $ is even, similar conclusion holds for the second step, etc. This means that $ {\bm \lambda} $ is destructible, a contradiction to the original hypothesis.
\end{proof}

%%%%%%%%%%%%%%%%%%%%%%%%%%%%
Let us now focus on irreducible characters of $ S_{2n} $ that correspond to destructible diagrams (see Theorem \ref{character} below).
%; we aim to prove Theorem \ref{character} below. 
Recall from Remark \ref{reversing} that every destructible diagram can be recursively built up by adding vertical/horizontal dominoes to its current border. It follows that every destructible diagram can be covered (perhaps in more than one way) with horizontal and vertical dominoes. For example, the diagram $ (2,2) $ has two different tilings $ D^{v}D^{v} $ and $ D^{h}D^{h} $, where $ D^{v} $ and $ D^{h} $ are vertical and horizontal dominoes, respectively.

To be more precise, a (domino) \textit{tiling} $ T $ of a diagram $ {\bm \lambda} $ is a placement of dominoes that covers all the cells of the diagram perfectly (i.e., no overlaps, no diagonal placements, no protrusions off the diagram).
We mention in passing that, alternatively, one can define a tiling to be a partition of the cells of $ {\bm \lambda} $ into a disjoint union of pairs of cells which share common edge (c.f. \eqref{DvDh}).

\begin{lemma}\label{pattern} 
Every tiling of a destructible diagram contains a domino rim-hook.
\end{lemma}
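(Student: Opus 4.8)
The plan is to prove the statement directly for the given tiling $T$, by locating a domino of $T$ that sits at the right boundary of ${\bm \lambda}$ and may be peeled off. Write ${\bm \lambda}=(\lambda_{1},\dots,\lambda_{t})$ and recall that a horizontal domino occupying the last two cells of a row $i$ is a rim-hook of ${\bm \lambda}$ exactly when $\lambda_{i}-2\ge\lambda_{i+1}$, while a vertical domino occupying $C_{(i-1)\lambda_{i}}$ and $C_{i\lambda_{i}}$ is a rim-hook exactly when $\lambda_{i-1}=\lambda_{i}$ and $\lambda_{i}-1\ge\lambda_{i+1}$. I would begin at the bottom-right corner cell $C_{t\lambda_{t}}$ and examine the domino $D\in T$ covering it. Its partner cell can only lie to the left or above (the right and lower neighbours are outside ${\bm \lambda}$), so $D$ is horizontal or vertical. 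If $D$ is horizontal it occupies the last two cells of the bottom row and is automatically a rim-hook; if $D$ is vertical and $\lambda_{t-1}=\lambda_{t}$ it is again a rim-hook. In both of these cases we are finished.

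The remaining case is that $C_{t\lambda_{t}}$ is covered by a vertical domino with $\lambda_{t-1}>\lambda_{t}$, and here I would climb upward. The key move is to pass to the rightmost cell $C_{(t-1)\lambda_{t-1}}$ of the row above, which lies strictly to the right of the vertical domino just examined. I would set up and maintain the following invariant: whenever we examine the rightmost cell $C_{(r-1)\lambda_{r-1}}$ of a row that strictly exceeds the row below it ($\lambda_{r-1}>\lambda_{r}$) and whose cell $C_{(r-1)\lambda_{r}}$ is already occupied by the vertical domino brought up from below, the domino covering $C_{(r-1)\lambda_{r-1}}$ is either a rim-hook or forces us one row higher. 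Concretely, if this cell is covered horizontally, then either $\lambda_{r-1}\ge\lambda_{r}+2$ and the domino is a rim-hook, or $\lambda_{r-1}=\lambda_{r}+1$, in which case the horizontal domino would have to reuse the already-occupied cell $C_{(r-1)\lambda_{r}}$, which is impossible; if it is covered vertically, then either $\lambda_{r-2}=\lambda_{r-1}$ and the domino is a rim-hook, or $\lambda_{r-2}>\lambda_{r-1}$ and the invariant is re-established one row higher.

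Since each climbing step strictly decreases the row index, the process terminates after finitely many steps. At the top row the vertical option disappears, so $C_{1\lambda_{1}}$ must be covered horizontally; by the invariant the value $\lambda_{1}=\lambda_{2}+1$ is excluded, hence $\lambda_{1}\ge\lambda_{2}+2$ and this top domino is a rim-hook. Thus in every case the procedure exhibits a domino of $T$ which is a rim-hook of ${\bm \lambda}$.

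I expect the main obstacle to be the bookkeeping inside the climbing step, and in particular the verification that the off-by-one subcase $\lambda_{r-1}=\lambda_{r}+1$ cannot occur together with a horizontal covering domino. This is precisely the point where one must use that the cell directly below the examined cell has already been consumed by the vertical domino carried up from the previous row; it is what prevents the climb from stalling and guarantees that we keep ascending until a rim-hook appears. I would also remark that destructibility enters only implicitly, namely to ensure that a tiling $T$ exists at all (compare Remark \ref{reversing}); the argument above in fact locates a rim-hook inside any given tiling of any diagram, which is all that the statement requires.
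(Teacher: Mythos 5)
Your argument is correct, and it takes a genuinely different route from the paper's. The paper first invokes destructibility (via Lemma \ref{Triangle-Remove}) to conclude that $ {\bm \lambda} $ is not triangular, then locates either the smallest row $k$ with $\lambda_{k}-\lambda_{k+1}\ge 2$ or, failing that, the largest row $k'$ with $\lambda_{k'}=\lambda_{k'+1}$, and from that row runs a forced-covering walk (upward in the first case, downward into the triangular tail in the second), ending either at a domino rim-hook belonging to $T$ or at a contradiction at the first resp.\ last row. You instead start at the single cell $C_{t\lambda_{t}}$ in the bottom-right corner and run one upward sweep with a clean invariant; the case analysis at each step (horizontal cover is either a rim-hook or collides with the occupied cell $C_{(r-1)\lambda_{r}}$; vertical cover is either a rim-hook or re-establishes the invariant one row higher) is complete, the strict decrease of the row index gives termination, and the top-row argument correctly rules out $\lambda_{1}=\lambda_{2}+1$. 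What your version buys is uniformity (a single walk instead of two symmetric cases) and generality: as you note, destructibility is never used, so you in fact prove that \emph{any} domino tiling of \emph{any} Young diagram contains a domino rim-hook -- which, combined with Lemma \ref{dest. domino} and induction, yields the pleasant byproduct that a diagram is destructible if and only if it admits a domino tiling, and in particular that a triangular diagram with more than one cell admits no tiling at all. The paper's proof stays closer to the machinery it has already built, but proves only the stated, weaker form.
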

\begin{proof}
Let $ {\bm \lambda}=(\lambda_{1},\dots,\lambda_{t}) $ be a destructible diagram and $ T $ be one of its tilings. As in the proof of Lemma \ref{Triangle-Remove}, we denote the cells of row $ i $ in $ {\bm \lambda} $ by $ C_{i1}, C_{i2},\dots, C_{i\lambda_{i}} $, so each vertical/horizontal domino can be written as \eqref{DvDh}.

Since $ {\bm \lambda} $ is destructible, by Lemma \ref{Triangle-Remove}, it cannot be triangular. So, the difference between two consecutive rows is not always $ 1 $, here for simplicity, we consider $ {\bm \lambda} $ appended with a virtual (empty) last row containing no cells.

Assume first there exists a row $ k $ such that the difference between two consecutive rows, 
\begin{equation}\label{dk}
d_{k}=\lambda_{k}-\lambda_{k+1} \geq 2.
\end{equation}
Without loss of generality, we may assume that $ k $ is the smallest one. In this case, the two cells $ D=\{ C_{k(\lambda_{k}-1)}, C_{k\lambda_{k}} \} $ form a horizontal domino rim-hook. If $ D $ is a part of the tiling $ T $ we are done. Otherwise, we must have $ k \geq2 $ and the cell $ C_{k\lambda_{k}} $ must be covered with a vertical domino 
$$ D^{v}=\{ C_{(k-1)\lambda_{k}}, C_{k\lambda_{k}} \} \in T. $$ 

Now, if $ \lambda_{k-1}=\lambda_{k} $, since by \eqref{dk} we have $\lambda_{k} \ge \lambda_{k+1}+2$, then $D^{v} $ is a domino rim-hook, and again we are done. The only possibility left to consider is that 
$$ d_{k-1}=\lambda_{k-1}-\lambda_{k}=1. $$ 
In this case $ C_{(k-1)\lambda_{k-1}} $, the last cell of row $ k-1 $, cannot be covered with a horizontal domino $ D^{h} $, because $ D^{h} $ intersects the vertical domino $ D^{v} $ in the cell $ C_{(k-1)(\lambda_{k-1}-1)}=C_{(k-1)\lambda_{k}} $. So $ C_{(k-1)\lambda_{k-1}} $ is covered with a vertical domino 
$$ D_{1}^{v}=\{ C_{(k-2)\lambda_{k-1}}, C_{(k-1)\lambda_{k-1}} \} \in T. $$
Recall that $ k $ was the first row satisfying \eqref{dk}, and consequently $ d_{i}=\lambda_{i} - \lambda_{i+1} \in \{ 0, 1\} $ for every $ i < k-1 $. Therefore we can recursively proceed backwards to show that the last cell of each preceding row can only be covered with a vertical domino until either we hit a row with $ d_{i}=0 $, or else $ d_{i}=1 $ for $ 1\le i\le k-2 $ and we reach a first row. In the former case we stopped at row $ i+1 $ and cover a cell $ C_{(i+1)(\lambda_{i+1}-1)} $ with a vertical domino, and hence its edge-adjacent cell $ C_{(i+1)\lambda_{i+1}} $ can only be covered with a vertical domino $ D^{v}=\{ C_{i\lambda_{i+1}},C_{(i+1)\lambda_{i+1}}\}=\{ C_{i\lambda_{i}},C_{(i+1)\lambda_{i}}\} $. Since $ \lambda_{i}=\lambda_{i+1} = \lambda_{i+2} +1 $, then $ D^{v} $ is actually a domino rim-hook, and we are done once again. In the latter case, the cell $ C_{1\lambda_{1}} $ (the last cell of the first row) would also have to be covered with a vertical domino, a contradiction. Therefore, the latter case is impossible.

Assume the last $ d_{i} \leq 1 $ for every $ i $, and there exists $ k' < t $ with 
$$ d_{k'}=\lambda_{k'}-\lambda_{k'+1}=0. $$
We may suppose that $ k' $ is the largest one with this property. Notice that now $ \lambda_{t}=1 $ and $ d_{i}=1 $ for every $ i > k' $; that is, from row $ k'+1 $ on the diagram is of triangular form. If $ k'=t-1 $, then $ \lambda_{t-1}=\lambda_{t}=1 $; that is, each of the last two rows contains a single cell which form a vertical domino rim-hook $ D^{v} $. Clearly, the only cell of the last row cannot be covered with a horizontal domino, and therefore $ D^{v} \in T $. So, again we are done.

Finally, let $ k' < t-1 $. Since $ \lambda_{k'}=\lambda_{k'+1} >\lambda_{k'+2} $, then the last two cells of rows $ k' $ and $ k'+1 $ form a vertical domino rim-hook. If it is a part of the tiling $ T $, we are done. Otherwise, the last cell of row $ k'+1 $ must be covered with a horizontal domino 
$$ D^{h}=\{ C_{(k'+1)(\lambda_{k'+1}-1)}, C_{(k'+1)\lambda_{k'+1}} \} \in T. $$ 
Therefore, $ C $, the last cell of row $ k'+2 $, cannot be covered with a vertical domino, because it intersects $ D^{h} $ in the cell $ C_{(k'+1)(\lambda_{k'+1}-1)} $. So, also $ C $ is covered with a horizontal domino 
$$ D_{1}^{h}=\{ C_{(k'+2)(\lambda_{k'+2}-1)}, C \} \in T. $$
This allows us to recursively proceed forwards and show that the last cell of each succeeding row can only be covered with a horizontal domino. This is a contradiction, because the last row contains only one cell and it cannot be covered with a horizontal domino.
\end{proof}

The \textit{parity} is a mapping $ \pi\colon \mathbb{Z} \rightarrow \{ -1, 1 \}$ defined by $ \pi(k)=(-1)^{k} $. %Notice that the parity of an even integer is $ 1 $, while the parity of an odd integer is $ -1 $. 
We also introduce the \textit{parity} of a diagram $ {\bm \lambda}=(\lambda_{1}, \dots, \lambda_{t}) $ by 
\begin{equation*}
\pi({\bm \lambda})=\prod_{i=1}^{t-1}\pi(\lambda_{i})^{t-i}.
\end{equation*}

\begin{lemma}\label{parity2}
Let $ {\bm \lambda}=(\lambda_{1}, \dots, \lambda_{t}) $ be a destructible diagram covered with dominoes. Then the parity of $ v $, the number of vertically placed dominoes, satisfies
\begin{equation*}
\pi(v)=\pi({\bm \lambda}).
\end{equation*}

\end{lemma}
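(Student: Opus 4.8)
The plan is to count the vertical dominoes row by row and show that the parity of $v$ is forced by ${\bm \lambda}$ alone, independently of the chosen tiling. First I would fix a covering of ${\bm \lambda}$ by dominoes (which exists since ${\bm \lambda}$ is destructible, as noted after Remark~\ref{reversing}) and, for each row $i\in\{1,\dots,t\}$, introduce three counts: $a_{i}$, the number of horizontal dominoes lying in row $i$; $u_{i}$, the number of vertical dominoes whose upper cell lies in row $i$ (so they occupy rows $i$ and $i+1$); and $w_{i}$, the number of vertical dominoes whose lower cell lies in row $i$. Two boundary identities are immediate, namely $w_{1}=0$ and $u_{t}=0$, since no vertical domino can protrude above the first row or below the last row. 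Moreover every vertical domino counted by $u_{i-1}$ is the same one counted by $w_{i}$, so $w_{i}=u_{i-1}$ for all $i$, and therefore $v=\sum_{i=1}^{t-1}u_{i}$.

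Next I would set up the row-length equation. Counting the cells of row $i$ according to which domino covers them gives $\lambda_{i}=2a_{i}+u_{i}+w_{i}$, whence, reducing modulo $2$ and using $w_{i}=u_{i-1}$,
\begin{equation*}
\lambda_{i}\equiv u_{i}+u_{i-1}\pmod 2,\qquad i=1,\dots,t,
\end{equation*}
with the convention $u_{0}=0$. This is a linear recurrence over $\FF_{2}$; solving it upward from $u_{0}=0$ yields $u_{i}\equiv\lambda_{1}+\lambda_{2}+\dots+\lambda_{i}\pmod 2$ for every $i$. As a consistency check, the instance $i=t$ reads $u_{t}\equiv|{\bm \lambda}|=2n\equiv 0$, matching the boundary condition $u_{t}=0$.

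Finally I would substitute this into $v=\sum_{i=1}^{t-1}u_{i}$ and interchange the order of summation:
\begin{equation*}
v\equiv\sum_{i=1}^{t-1}\sum_{j=1}^{i}\lambda_{j}=\sum_{j=1}^{t-1}\lambda_{j}\,(t-j)\pmod 2.
\end{equation*}
Raising $-1$ to both sides then gives exactly $\pi(v)=(-1)^{v}=\prod_{j=1}^{t-1}(-1)^{\lambda_{j}(t-j)}=\prod_{j=1}^{t-1}\pi(\lambda_{j})^{t-j}=\pi({\bm \lambda})$, which is the claim.

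The calculation itself is routine once the counts are set up; the point requiring care is the bookkeeping of the vertical dominoes across adjacent rows together with the two boundary conditions $w_{1}=u_{t}=0$, for it is precisely these that make the recurrence determine each $u_{i}$ modulo $2$ with no reference to the particular tiling. In particular, destructibility enters only to guarantee that a covering exists; once any covering is fixed, the identity $\pi(v)=\pi({\bm \lambda})$ shows that the parity of the vertical count is a tiling invariant.
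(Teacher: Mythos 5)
Your proof is correct and is essentially the paper's own argument in additive (mod 2) notation: your $u_i$ is the paper's $z_i$, your identity $\lambda_i\equiv u_i+u_{i-1}\pmod 2$ is the paper's $\pi(\lambda_i)=\pi(m_i)$ with $m_i=z_{i-1}+z_i$, and the telescoping solution $u_i\equiv\lambda_1+\dots+\lambda_i$ matches $\pi(z_i)=\pi(\lambda_1)\cdots\pi(\lambda_i)$. The only (harmless) difference is bookkeeping style; the final interchange of summation giving $\sum_j\lambda_j(t-j)$ is exactly the paper's product $\prod_j\pi(\lambda_j)^{t-j}$.
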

\begin{proof}
For $ i \in \{1,\dots,t-1 \} $, let $ z_{i} $ be the number of vertically placed dominoes which intersect both rows $ i $ and $ i+1 $. Let also $ m_{i} $ be the number of vertically placed dominoes that intersect row $ i $ with one among its two cells, $ i \in \{1,\dots,t \}$. Clearly, 
\begin{equation}\label{1st}
m_{1}=z_{1};
\end{equation}
and moreover,
\begin{equation}\label{2nd}
m_{2}=z_{1}+z_{2}, \quad m_{3}=z_{2}+z_{3}, \quad \dots , \quad m_{t-1}=z_{t-2}+z_{t-1}, \quad \text{and} \quad m_{t}=z_{t-1}.
\end{equation}
Next notice that $ \lambda_{i}-m_{i} $ equals the number of horizontally placed dominoes in row
$ i $, so it must always be an even integer. Therefore, $1= \pi(\lambda_{i}-m_{i})=\pi(\lambda_{i})/\pi(m_{i}) $, and so
\begin{equation}\label{3rd}
\pi(\lambda_{i})=\pi(m_{i}).
\end{equation}
From \eqref{3rd} and \eqref{1st} we have $ \pi(\lambda_{1})=\pi(m_{1})=\pi(z_{1}) $. Moreover, from \eqref{3rd} and \eqref{2nd} we have $ \pi(\lambda_{2})=\pi(m_{2})=\pi(z_{1}+z_{2})=\pi(z_{1})\pi(z_{2})=\pi(\lambda_{1})\pi(z_{2}) $. Therefore, $ \pi(z_{2})=\pi(\lambda_{1})\pi(\lambda_{2}) $. Also, we have $\pi(\lambda_{3})=\pi(m_{3})=\pi(z_{2}+z_{3})=\pi(z_{2})\pi(z_{3})=\pi(\lambda_{1})\pi(\lambda_{2})\pi(z_{3}) $. Then $ \pi(z_{3})=\pi(\lambda_{1})\pi(\lambda_{2})\pi(\lambda_{3}) $. Proceeding recursively for any $ i \in \{1,\dots,t-1 \} $ we have $ \pi(z_{i})=\pi(\lambda_{1})\pi(\lambda_{2}) \dots \pi(\lambda_{i}) $. Hence,
\begin{align*}
\pi(v)&=\pi(z_{1}+ \dots + z_{t-1})\\
&=\pi(z_{1})\pi(z_{2}) \dots \pi(z_{t-1})\\
&=\pi(\lambda_{1}) \times \pi(\lambda_{1})\pi(\lambda_{2}) \times \dots \times
\pi(\lambda_{1})\pi(\lambda_{2}) \dots \pi(\lambda_{t-1})\\
&=\pi(\lambda_{1})^{t-1}\times \pi(\lambda_{2})^{t-2} \times \dots \times \pi(\lambda_{t-1})\\
&=\prod_{i=1}^{t-1}\pi(\lambda_{i})^{t-i}=\pi({\bm \lambda}).
\end{align*}
\end{proof}

\begin{remark}\label{vertical}
\textnormal{
Let a destructible diagram $ {\bm \lambda}=(\lambda_{1}, \dots, \lambda_{t}) $ be covered with dominoes and let $ m_{i} $ be as above. Then $ v $ and $ h $, the numbers of vertically/horizontally placed dominoes in any tiling of $ {\bm \lambda} $, satisfy 
\begin{equation*}
v=\frac{m_{1}+\dots + m_{t}}{2}=\frac{|{\bm \lambda} |-h}{2}.
\end{equation*}
Namely, we have $ m_{1}=z_{1} $, $ m_{2}=z_{1}+z_{2} $, $ \dots $, $ m_{t-1}=z_{t-2}+z_{t-1} $ and $ m_{t}=z_{t-1} $. 
Since $ v=z_{1}+\dots + z_{t-1} $, then by summing up we have $ m_{1}+\dots + m_{t}=2(z_{1}+\dots + z_{t-1})=2v $.
The second equation follows from $ 2(v+h)=|{\bm \lambda} | $, because each domino occupies two cells.
}
\end{remark}

Given a tiling $ T $ of a destructible diagram ${ \bm \lambda } $, the number of different ways to recursively remove all the dominoes from $ { \bm \lambda } $, where at each step we remove a single domino rim-hook by following the tiling $ T$, will be denoted by $ |T| $ (see Fig. \ref{decomposition3333}).

\begin{theorem}\label{character}
Let $ {\bm \lambda} $ be a destructible diagram of $ | {\bm \lambda} |=2n $, and let $ \mathcal{T} $ be the set of all possible tilings of $ \lambda $. If the cycle type of a permutation $ \rho \in S_{2n} $ is $ (2, \dots, 2) $, then
\begin{equation*}
\chi^{{\bm \lambda}}(\rho)=\pi({\bm \lambda})\sum_{T \in \mathcal{T}} |T| \neq 0.
\end{equation*}
\end{theorem}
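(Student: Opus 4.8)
The plan is to compute $\chi^{\bm\lambda}(\rho)$ directly with the Murnaghan–Nakayama rule (Theorem \ref{Murnaghan-Nakayama Rule}) and to match the resulting sum term-by-term with the right-hand side. Since $\rho$ has cycle type $(2,\dots,2)$, every cycle $\rho_i$ has length $2$, so at each recursive step the rule removes a rim-hook of length $2$, that is, a domino rim-hook, and the recursion terminates only when the whole diagram has been stripped. Iterating the rule therefore yields
$$
\chi^{\bm\lambda}(\rho)=\sum_{S}(-1)^{ll(\bm\zeta_1)+\dots+ll(\bm\zeta_n)},
$$
where $S=(\bm\zeta_1,\dots,\bm\zeta_n)$ ranges over all ordered sequences of domino rim-hook removals that destroy $\bm\lambda$ completely, i.e.\ over exactly the recursive procedures $R_{\bm\lambda}$ introduced in Section 3.

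First I would evaluate the leg-length signs. A horizontal domino occupies a single row, so $ll=0$, whereas a vertical domino spans two rows, so $ll=1$; hence $(-1)^{ll(\bm\zeta_i)}$ equals $+1$ or $-1$ according as $\bm\zeta_i$ is horizontal or vertical. Consequently each sequence $S$ contributes the sign $(-1)^{v(S)}$, where $v(S)$ is the number of vertical dominoes removed along $S$.

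Next I would reorganize the sum according to the underlying tiling. Each sequence $S$ removes a family of pairwise disjoint dominoes whose union is all of $\bm\lambda$, and this family is precisely a tiling $T\in\mathcal{T}$; by the very definition of $|T|$, the number of sequences $S$ producing a fixed $T$ is $|T|$. To ensure no tiling is missed, i.e.\ that $|T|\ge 1$ for every $T\in\mathcal{T}$, I would argue inductively: by Lemma \ref{pattern} any tiling $T$ of the destructible diagram $\bm\lambda$ contains a domino rim-hook $D$, and removing it leaves, by Lemma \ref{dest. domino}, a destructible diagram $\bm\lambda\setminus D$ tiled by $T\setminus D$, so the argument repeats and the removal never gets stuck. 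Since $v(S)$ depends only on the resulting tiling $T$ (it is the number of vertical dominoes of $T$), grouping the terms gives
$$
\chi^{\bm\lambda}(\rho)=\sum_{T\in\mathcal{T}}|T|\,(-1)^{v(T)}.
$$

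The decisive step is then Lemma \ref{parity2}, which asserts that $(-1)^{v(T)}=\pi(v)=\pi(\bm\lambda)$ is the same for every tiling $T$, independently of $T$. Pulling this common sign out of the sum yields $\chi^{\bm\lambda}(\rho)=\pi(\bm\lambda)\sum_{T\in\mathcal{T}}|T|$. Finally, destructibility of $\bm\lambda$ guarantees $\mathcal{T}\neq\emptyset$ and $\sum_{T\in\mathcal{T}}|T|\ge 1$, while $\pi(\bm\lambda)\in\{-1,1\}$, so the product is nonzero, as required. The main obstacle I anticipate is the bookkeeping of the middle step, namely carefully identifying the Murnaghan–Nakayama removal sequences with pairs (tiling, compatible removal order) and confirming via Lemmas \ref{pattern} and \ref{dest. domino} that every tiling is realizable, so that $\sum_{T\in\mathcal{T}}|T|$ counts exactly the surviving terms. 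Once that correspondence is secured, the sign computation together with Lemma \ref{parity2} finishes the proof immediately.
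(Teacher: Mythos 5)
Your proposal is correct and follows essentially the same route as the paper's proof: iterate Murnaghan--Nakayama to reduce to a signed count of complete domino rim-hook removal sequences, group the sequences by their underlying tiling (each tiling accounting for exactly $|T|$ sequences, realizable by Lemmas \ref{pattern} and \ref{dest. domino}), and factor out the common sign via Lemma \ref{parity2}. No substantive differences from the paper's argument.
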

\begin{proof}
Let $ \rho=\rho_{1}\dots \rho_{n} $ be a decomposition of $ \rho $ into disjoint cycles, each of order two.
By Murnaghan-Nakayama rule, $ \chi^{{\bm \lambda}}(\rho)=\sum_{D_{1}}(-1)^{ll(D_{1})}\chi^{{\bm \lambda} \backslash D_{1}}(\rho \setminus \rho_{1})$, where the sum runs over all domino rim-hooks $ D_{1} $ of $ {\bm \lambda} $. By Lemma \ref{dest. domino}, the diagram $ {\bm \lambda} \setminus D_{1} $ is still destructible, so it again contains at least one domino rim-hook.

This allows us to recursively continue the Murnaghan-Nakayama rule on each summand $\chi^{{\bm \lambda} \backslash D_{1}}(\rho \setminus \rho_{1}) $. After $ r $ steps we get
\begin{equation*}
\chi^{{\bm \lambda}}(\rho)=\sum_{D_{1}, \dots,D_{r}}(-1)^{ll(D_{1})+ \dots+ll(D_{r})}\chi^{{\bm \lambda} \setminus \bigcup_{i=1}^{r}D_{i}}(\rho_{r+1} \dots \rho_{n}),
\end{equation*}
where the summation is over all dominoes $ D_{1}, \dots, D_{r} $ such that for each $ i \in \{1, \dots, r \} $, domino $ D_{i} $ is a domino rim-hook of $ {\bm \lambda} \setminus \bigcup_{j=1}^{i-1}D_{j} $.
Since $ {\bm \lambda} $ is destructible, we will remove all the cells from $ {\bm \lambda} $ after $ n $ steps and get
\begin{equation}\label{M.N.R.2}
\chi^{{\bm \lambda}}(\rho)=\sum_{D_{1}, \dots,D_{n}}(-1)^{ll(D_{1})+ \dots+ll(D_{n})}.
\end{equation}
Now, we will rearrange this sum. Each summand corresponds to a tiling $ T $ of $ { \bm \lambda } $ determined by vertical/horizontal dominoes $ D_{1}, \dots, D_{n} $. Moreover, the exponent ${ll(D_{1})+ \dots+ll(D_{n})} $ coincides with the number of vertically placed dominoes in tiling $ T $. So by Lemma \ref{parity2} the value of this summand, $ (-1)^{ll(D_{1})+ \dots+ll(D_{n})} $ is equal to $\pi({\bm \lambda})$, the parity of $ { \bm \lambda } $.

Notice that more than one summand may correspond to the same tiling $ T $. This happens when during the recursive Murnaghan-Nakayama procedure we encounter a step where we have more than one possibility to remove domino rim-hook by following the given tiling. It follows that in \eqref{M.N.R.2} there are exactly $ |T| $ summands corresponding to the tiling $ T $.

Conversely, given any tiling $ T $ of $ { \bm \lambda } $, it contains, by Lemma \ref{pattern}, at least one domino rim-hook. This allows us to recursively remove all the dominoes from $ T $ in such a way that at each step one of the current domino rim-hooks is removed. If we ignore the sign, this corresponds exactly to one of the summands of \eqref{M.N.R.2} obtained after finishing the recursive Murnaghan-Nakayama procedure.
This shows that the equation \eqref{M.N.R.2} equals to $ \pi({\bm \lambda}) \sum |T|$, where we sum up over all the tilings $T$ of ${\bm \lambda}$, as claimed.
\end{proof}

Recall that $ P_{2n} $ is the subset of all permutations in the symmetric group $ S_{2n} $ with no cycles of odd length in the decomposition into the product of disjoint cycles; that is, the set of all permutations of even length. Now, we have the following corollary:
\begin{corollary}\label{irr-ch}
The following are equivalent for the irreducible character $ \chi $ of $ S_{2n} $:
\begin{itemize}
\item[(i)] $ \chi $ is induced by the indestructible diagram.
\item[(ii)] $ \chi(\rho)=0 $ for a permutation $ \rho=(1,2)(3,4) \dots(2n-1,2n) $.
\item[(iii)] $ \chi $ vanishes identically on the subset $ P_{2n} \subseteq S_{2n} $.
\end{itemize}
\end{corollary}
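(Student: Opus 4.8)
The plan is to prove the three implications (i) $\Rightarrow$ (iii) $\Rightarrow$ (ii) $\Rightarrow$ (i) in a cycle, leaning entirely on the two theorems just established together with the elementary fact that every diagram is either destructible or indestructible. The observation that ties the cycle together is that the specific permutation $ \rho=(1,2)(3,4)\dots(2n-1,2n) $ appearing in (ii) simultaneously lies in $ P_{2n} $ and has cycle type $ (2,\dots,2) $; these two roles are exactly what let the chain close.

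For (i) $\Rightarrow$ (iii) I would simply invoke Theorem \ref{Main1}: if $ \chi=\chi^{{\bm \lambda}} $ is induced by an indestructible diagram $ {\bm \lambda} $ with $ |{\bm \lambda}|=2n $, then that theorem asserts precisely that $ \chi $ vanishes identically on $ P_{2n} $. Nothing further is needed.

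For (iii) $\Rightarrow$ (ii) I would note that $ \rho=(1,2)(3,4)\dots(2n-1,2n) $ is a product of $ n $ disjoint transpositions, so its cycle decomposition consists only of cycles of length $ 2 $, all of which are even. Hence $ \rho \in P_{2n} $, and if $ \chi $ vanishes on all of $ P_{2n} $ then in particular $ \chi(\rho)=0 $. This step is immediate.

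The last implication (ii) $\Rightarrow$ (i) I would argue by contraposition. Suppose $ \chi $ is \emph{not} induced by an indestructible diagram; since any diagram is either destructible or indestructible, $ \chi=\chi^{{\bm \lambda}} $ is then induced by a destructible diagram $ {\bm \lambda} $. Because $ \rho $ has cycle type $ (2,\dots,2) $, Theorem \ref{character} applies directly and yields $ \chi(\rho)=\pi({\bm \lambda})\sum_{T \in \mathcal{T}}|T|\neq 0 $, so (ii) fails. I expect no genuine obstacle here: all the real difficulty has already been absorbed into Theorems \ref{Main1} and \ref{character}, and the only care required is to verify that $ \rho $ discharges its double duty correctly, namely membership in $ P_{2n} $ for the step (iii) $\Rightarrow$ (ii) and cycle type $ (2,\dots,2) $ for the step (ii) $\Rightarrow$ (i).
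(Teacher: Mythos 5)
Your proposal is correct and follows exactly the same route as the paper: it cites Theorem \ref{Main1} for (i) $\Rightarrow$ (iii), observes $\rho \in P_{2n}$ for (iii) $\Rightarrow$ (ii), and applies Theorem \ref{character} (contrapositively) for (ii) $\Rightarrow$ (i). The only difference is that you spell out the contraposition and the double role of $\rho$ explicitly, which the paper leaves implicit.
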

\begin{proof}
\mbox{}\\
(i) $ \Rightarrow $ (iii) has been proven in Theorem \ref{Main1}.\\
(iii) $ \Rightarrow $ (ii) is obvious.\\
(ii) $ \Rightarrow $ (i) follows from Theorem \ref{character}.
\end{proof}

We conclude the paper by classifying the immanants which vanish identically on alternate matrices $ \mathbb{A}_n(\CC) $.

\begin{corollary}\label{m.cor}
Let $d_{\chi}\colon \mathbb{M}_n(\CC)\to \CC$ be an immanant induced by an irreducible
character $\chi$ of $S_n$. Then the following are equivalent.
\begin{itemize}
\item[(i)] $d_\chi$ vanishes identically on a subspace $  \mathbb{A}_n(\CC) $ of alternate matrices.
\item[(ii)] Either $n$ is odd, or $n$ is even and $d_\chi(J\oplus\dots \oplus J)=0$ where $J=\left(\begin{smallmatrix}
0&1\\
-1&0\end{smallmatrix}\right) $.
\item[(iii)] Either $n$ is odd, or $n$ is even and $\chi$ is induced by an indestructible diagram.
\end{itemize}
\end{corollary}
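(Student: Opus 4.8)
The plan is to prove the cyclic chain (i) $\Rightarrow$ (ii) $\Rightarrow$ (iii) $\Rightarrow$ (i), treating the odd and even cases separately. When $n$ is odd the statement is vacuous in the following sense: the introductory transpose computation $d_\chi(A)=d_\chi(A^T)=d_\chi(-A)=(-1)^n d_\chi(A)$ already forces $d_\chi\equiv 0$ on $\mathbb{A}_n(\CC)$, so (i) holds, while (ii) and (iii) hold by their defining ``$n$ odd'' clauses. Thus for odd $n$ all three conditions are simultaneously true and there is nothing to check; I would therefore fix $n$ even from here on.

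For even $n$ the implication (i) $\Rightarrow$ (ii) is immediate, since $J\oplus\dots\oplus J$ is itself an alternate matrix and hence lies in $\mathbb{A}_n(\CC)$. The substantive step is (ii) $\Rightarrow$ (iii), and its core is an explicit evaluation of $d_\chi$ on this block matrix. Writing $A=J\oplus\dots\oplus J$ (with $n/2$ diagonal blocks) and applying Proposition \ref{impn}, I would observe that the only nonzero entries of $A$ are $a_{2k-1,2k}=1$ and $a_{2k,2k-1}=-1$; consequently $\prod_{i=1}^n a_{i\sigma(i)}\neq 0$ forces $\sigma(i)$ to be the block-partner of $i$ for every $i$. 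Hence the sole surviving permutation is $\rho=(1,2)(3,4)\dots(n-1,n)$, and its product equals $\prod_{k=1}^{n/2}a_{2k-1,2k}\,a_{2k,2k-1}=(-1)^{n/2}$. This yields the identity $d_\chi(J\oplus\dots\oplus J)=(-1)^{n/2}\chi(\rho)$, so condition (ii) is equivalent to $\chi(\rho)=0$. Since $\rho$ has cycle type $(2,\dots,2)$, Corollary \ref{irr-ch} (its condition (ii)) identifies $\chi(\rho)=0$ with $\chi$ being induced by an indestructible diagram, giving exactly (iii).

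Finally, (iii) $\Rightarrow$ (i) combines Theorem \ref{Main1}, which shows that a character induced by an indestructible diagram vanishes identically on $P_n$, with Proposition \ref{impn}, which expresses $d_\chi$ on $\mathbb{A}_n(\CC)$ solely through the values $\chi(\sigma)$, $\sigma\in P_n$; as these all vanish, $d_\chi\equiv 0$. The only real content of the argument is the single-permutation collapse in the computation of $d_\chi(J\oplus\dots\oplus J)$, and I expect the one point needing care is the justification that the $2\times 2$ block structure of $A$ forces $\sigma=\rho$; once that identity is established, Corollary \ref{irr-ch} and Theorem \ref{Main1} supply everything else, so no serious obstacle remains.
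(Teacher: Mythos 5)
Your proposal is correct and follows essentially the same route as the paper: the same cyclic chain, the same single-permutation collapse giving $d_\chi(J\oplus\dots\oplus J)=(-1)^{n/2}\chi((1,2)\dots(n-1,n))$, and the same appeals to Proposition \ref{impn}, Theorem \ref{Main1} and Corollary \ref{irr-ch}. No issues.
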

\begin{proof}
\mbox{}\\
(iii) $ \Rightarrow $ (i). Let $ A $ be an alternate matrix in $  \mathbb{M}_n(\CC) $, and $ A^{T} $ be its transpose. If $ n $ is odd, then 
%$d_{\chi}(A)= d_{\chi}(A^{T})=d_{\chi}(-A)=(-1)^{n}d_{\chi}(A)=-d_{\chi}(A) $, which implies 
$d_{\chi}(A)=0$.
Now, let $ n $ be even and $\chi$ be induced by an indestructible diagram. Then, by Theorem \ref{Main1}, $ \chi $ vanishes identically on every permutation $ \rho \in P_{n} $. Therefore, by Proposition \ref{impn}, $d_\chi$ vanishes identically on every alternate matrix.\\
(i) $ \Rightarrow $ (ii) is trivial since $ J\oplus\dots \oplus J $ is an alternate matrix.\\
%Let $d_\chi$ vanish identically on a subspace of alternate matrices, and $ n $ is even. Since
%is an alternate matrix, then $d_\chi(J\oplus\dots \oplus J)=0$.\\
(ii) $ \Rightarrow $ (iii). Let $ n $ be even and let 
\begin{equation*} 
A=(a_{ij})=J\oplus\dots \oplus J=\begin{bmatrix}
\left(\begin{smallmatrix}
0 & 1\\
-1 & 0
\end{smallmatrix}\right) & 0 & \dots & 0\\

0 &
\left(\begin{smallmatrix}
0 & 1\\
-1 & 0
\end{smallmatrix}\right) & \dots & 0 \\

\vdots & \vdots & \ddots & \vdots \\

0 &
0 & \dots &
\left(\begin{smallmatrix}
0 & 1\\
-1 & 0
\end{smallmatrix}\right)
\end{bmatrix}.
\end{equation*}

Notice that if $ (i,j) \notin \{ (2k-1,2k), (2k, 2k-1) ; \; 1 \leq k \leq \frac{n}{2} \} $, then $ a_{ij}=0 $. So, in computing the immanant of $A$, every summand corresponding to a permutation $ \sigma$ with $ \sigma(2k-1)\neq2k $ or $ \sigma(2k)\neq 2k-1 $ for some $ k \in \{1, \dots, \frac{n}{2} \} $ is multiplied by $\prod_{i=1}^{n}a_{i\sigma(i)}=0$. Therefore
%So, for any permutation $ \sigma $, if $ \sigma(2k-1)\neq2k $ or $ \sigma(2k)\neq2k-1 $ for some $ k \in \{1, \dots, \frac{n}{2} \} $, then $ \chi(\sigma)=0 $. Therefore
\begin{align*}
d_\chi(A)&=\chi((1,2)(3,4) \dots(n-1,n)) \prod_{i=1}^{\frac{n}{2}}a_{(2i-1)(2i)} \, a_{(2i)(2i-1)}\\
&=\chi((1,2)(3,4) \dots(n-1,n)) \cdot (-1)^{\frac{n}{2}}.
\end{align*}
Since $ d_{\chi}(A)=0 $, then $ \chi((1,2)(3,4) \dots(n-1,n))=0 $. Therefore, by Corollary \ref{irr-ch}, $\chi$ is induced by an indestructible diagram.
\end{proof}

Let us finish with a remark that Corollary \ref{irr-ch} is not true in other fields. For example in fields of characteristic $ 2 $, character induced by a destructible diagram $ {\bm \lambda}=(4,4) $ vanishes identically on $ P_{8} $.
We do not know what happens in fields of other characteristic.
\section{Addendum}\label{Addendum}
\noindent
In \cite{Kuz}, the following result was stated.

\begin{theorem}\textnormal{\cite[Theorem 1]{Kuz}}\label{thm:main}
Let $n\ge3$ be an integer, let $\FF$ be a field with $|\FF|\ge n+1$, and let $\chi,\chi'$ be two irreducible complex
characters of the symmetric group $S_n$. Suppose $\Phi \colon \mathbb{M}_{n}(\FF)\to \mathbb{M}_{n}(\FF)$ is any map with the property
\begin{equation}\label{eq:idenitiy}%
d_{\chi}(A+\lambda B)=d_{\chi'}(\Phi(A)+\lambda\Phi(B));\qquad (A,B\in
\mathbb{M}_{n}(\FF);\;\lambda\in\FF).
\end{equation}
Then $\Phi$ is automatically linear and bijective.
\end{theorem}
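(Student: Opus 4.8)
The plan is to convert the single identity \eqref{eq:idenitiy} into a whole family of polar identities and then read off linearity and bijectivity from a non-degeneracy property of the immanant. First I would observe that $d_\chi$ is homogeneous of degree $n$ and multilinear in the rows of its argument; writing $\overline{d_\chi}(v_1,\dots,v_n)$ for the associated $n$-linear form on rows, so that $d_\chi(A)=\overline{d_\chi}(a_1,\dots,a_n)$ for the rows $a_i$ of $A$. For fixed $A,B$ both sides of \eqref{eq:idenitiy} are then polynomials in $\lambda$ of degree at most $n$, and since they agree at the $|\FF|\ge n+1$ elements of $\FF$, they coincide coefficientwise. Comparing the coefficients of $\lambda^{0}$ (or of $\lambda^{n}$) gives $d_\chi(A)=d_{\chi'}(\Phi(A))$ for every $A$, while comparing the coefficients of $\lambda^{1}$ gives the first polar identity
\begin{equation}\label{eq:polar}
\langle \nabla d_\chi(A),\, B\rangle=\langle \nabla d_{\chi'}(\Phi(A)),\, \Phi(B)\rangle\qquad(A,B\in\mathbb{M}_{n}(\FF)),
\end{equation}
where $\langle M,N\rangle=\sum_{i,j}M_{ij}N_{ij}$ and $\nabla d_\chi(X)$ is the matrix of partial derivatives of $d_\chi$ at $X$.

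The engine of the argument is the claim that the gradients $\{\nabla d_\chi(X):X\in\mathbb{M}_{n}(\FF)\}$ span $\mathbb{M}_{n}(\FF)$. To see when this holds I would compute
\begin{equation*}
\frac{\partial d_\chi}{\partial x_{ij}}(X)=\sum_{\sigma\colon\sigma(i)=j}\chi(\sigma)\prod_{l\ne i}x_{l\sigma(l)} ,
\end{equation*}
and note that a permutation is determined by its restriction to $\{1,\dots,n\}\setminus\{i\}$, so the monomials occurring in distinct partial derivatives $\partial d_\chi/\partial x_{ij}$ have pairwise disjoint supports. Hence a relation $\sum_{i,j}M_{ij}\,\partial d_\chi/\partial x_{ij}\equiv 0$ forces $M_{ij}=0$ whenever $\partial d_\chi/\partial x_{ij}\not\equiv 0$, and the latter fails exactly when every $\sigma$ with $\sigma(i)=j$ satisfies $\chi(\sigma)=0$ in $\FF$. (Here I use $|\FF|\ge n+1$, so that a polynomial of degree at most $n$ vanishing as a function is the zero polynomial.) Thus the gradients span $\mathbb{M}_{n}(\FF)$ precisely when, for every pair $(i,j)$, some $\sigma\in S_n$ has $\sigma(i)=j$ and $\chi(\sigma)\ne 0$ in $\FF$; I shall call this the \emph{key property} for $\chi$, and I note the analogous statement for $\chi'$.

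Granting the key property for both $\chi$ and $\chi'$, the conclusion follows. For injectivity, if $\Phi(A_1)=\Phi(A_2)$ then \eqref{eq:idenitiy} gives $d_\chi(A_1+\lambda B)=d_\chi(A_2+\lambda B)$ for all $B,\lambda$; comparing the coefficients of $\lambda^{n-1}$ yields $\langle \nabla d_\chi(B),A_1-A_2\rangle=0$ for every $B$, whence $A_1=A_2$ by the span property on the $\chi$-side. For additivity and homogeneity I would perturb the second argument of \eqref{eq:polar}: using the linearity of $B\mapsto\langle \nabla d_\chi(A),B\rangle$, replacing $B$ by $B_1+B_2$ (respectively by $\alpha B$) gives
\begin{equation*}
\bigl\langle \nabla d_{\chi'}(\Phi(A)),\ \Phi(B_1+B_2)-\Phi(B_1)-\Phi(B_2)\bigr\rangle=0 \qquad\text{for all }A ,
\end{equation*}
and likewise $\langle \nabla d_{\chi'}(\Phi(A)),\ \Phi(\alpha B)-\alpha\Phi(B)\rangle=0$ for all $A$. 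These displayed quantities vanish once $\{\nabla d_{\chi'}(\Phi(A)):A\}$ spans $\mathbb{M}_{n}(\FF)$, i.e.\ once $\Phi$ is surjective (together with the key property for $\chi'$). Surjectivity is immediate from injectivity over a finite field, and over an infinite field I would deduce it from the injectivity together with $d_\chi=d_{\chi'}\circ\Phi$ by a dimension/density argument.

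The step I expect to be the main obstacle is the key property itself. In characteristic $0$ it holds for every irreducible $\chi$, and the theorem is valid as stated. In positive characteristic, however, $\chi(\sigma)$ is reduced modulo $p$, and it can happen that $\chi(\sigma)=0$ in $\FF$ for \emph{every} $\sigma$ with $\sigma(i)=j$; then $\partial d_\chi/\partial x_{ij}\equiv 0$, the gradients fail to span, and both the injectivity step and the linearity deduction break down. This is precisely the gap that must be corrected: automatic linearity and bijectivity hold provided the key property is satisfied by $\chi$ and $\chi'$ (in particular whenever $\charac\FF=0$), and deciding when it holds for $\charac\FF=p$ is a genuinely combinatorial question about the vanishing of characters on cosets of point stabilizers, in the same spirit as the vanishing problems treated in the body of this paper.
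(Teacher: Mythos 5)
Your diagnosis is exactly right, and it coincides with what the paper itself says about this statement: Theorem~\ref{thm:main} is quoted from \cite{Kuz} precisely in order to point out that it is \emph{not} provable as stated. The paper does not reproduce a proof; it observes that the arguments of \cite{Kuz} go through only under the additional hypothesis that for every pair $(i,j)$ there is some $\sigma\in S_n$ with $\sigma(i)=j$ and $\chi(\sigma)\ne 0$ in $\FF$ --- which is verbatim your ``key property'' --- and restates the corrected result as Theorem~\ref{thm:main2}. Your identification of where the proof breaks (the gradients $\nabla d_\chi(X)$ fail to span $\mathbb{M}_n(\FF)$ exactly when $\partial d_\chi/\partial x_{ij}\equiv 0$ for some $(i,j)$, which happens exactly when the key property fails) is the same obstruction the paper isolates in its discussion of \cite[Lemma 4]{Kuz}. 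Where the paper goes further is in settling the ``genuinely combinatorial question'' you leave open: Lemma~\ref{newnew} shows, by peeling complete rim-hooks via the Murnaghan--Nakayama rule, that for every irreducible $\chi$ and every $(i,j)$ one can find $\sigma$ with $\sigma(i)=j$ and $\chi(\sigma)\in\{-2,-1,1,2\}$, so the key property holds automatically whenever $\charac(\FF)\ne 2$; and Example~\ref{ex1} shows it genuinely fails in characteristic $2$ (for $\chi^{(3,1,1)}$ with $n=5$, the non-linear, non-bijective map $X\mapsto X+E_{11}$ satisfies \eqref{eq:idenitiy}), so the theorem as stated is actually false there.

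One smaller point worth tightening in your sketch: your linearity step needs $\{\nabla d_{\chi'}(\Phi(A)) : A\in\mathbb{M}_n(\FF)\}$ to span, for which you invoke surjectivity of $\Phi$, but over an infinite field you propose to get surjectivity from injectivity by a ``dimension/density argument'' --- which presupposes linearity. As written this is circular; the argument in \cite{Kuz} has to (and does) break this cycle by establishing the spanning property of the gradients along the image of $\Phi$ directly from the identity $d_\chi = d_{\chi'}\circ\Phi$ and its polarizations, before linearity is known. This does not affect your main conclusion, but it is a gap you would need to fill if you were writing the argument out in full rather than citing \cite{Kuz}.
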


However, the arguments which prove this result work only under the additional assumption that for every
pair of integers $(i,j)\in\{1,\dots,n\}$ there exists a permutation $\sigma\in S_n$ with $\sigma(i)=j$ and with
$\chi(\sigma)\ne 0\in \FF$ (this fact is needed in the proof of \cite[Lemma 4]{Kuz}, see also Example \ref{ex1} below).

With this additional assumption on the character $ \chi $, all the arguments in \cite{Kuz} are then valid, and hence prove the following (which corrects Theorem \ref{thm:main}).

\begin{theorem}\label{thm:main2}
Let~$n\ge3$ be an integer, let $\FF$ be a field with $|\FF|\ge n+1$, and let $\chi,\chi'$ be two irreducible complex
characters of the symmetric group $S_n$ such that for every pair of integers $(i,j)\in\{1,\dots,n\}$ there
exists a permutation $\sigma\in S_n$ with $\sigma(i)=j$ and $\chi(\sigma)\ne 0\in \FF$. Suppose $\Phi \colon \mathbb{M}_{n}(\FF) \to \mathbb{M}_{n}(\FF)$ is any map with the property
\begin{equation*}
d_{\chi}(A+\lambda B)=d_{\chi'}(\Phi(A)+\lambda\Phi(B));\qquad (A,B\in
\mathbb{M}_{n}(\FF);\;\lambda\in\FF).
\end{equation*}
Then $\Phi$ is automatically linear and bijective.
\end{theorem}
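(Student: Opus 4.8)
The plan is to follow, step for step, the scheme of \cite{Kuz} (which in turn adapts \cite{Tan}), now verifying that the newly imposed hypothesis on $\chi$ is exactly what that scheme requires. The opening move is to convert the pencil identity into a family of \emph{coefficientwise} identities. For fixed $A,B$ the map $\lambda\mapsto d_{\chi}(A+\lambda B)$ is a polynomial in $\lambda$ of degree at most $n$, since expanding $\prod_{i}(a_{i\sigma(i)}+\lambda b_{i\sigma(i)})$ gives
$$
d_{\chi}(A+\lambda B)=\sum_{k=0}^{n}\lambda^{k}\,c_{k}(A,B),\qquad c_{k}(A,B)=\sum_{\sigma\in S_{n}}\chi(\sigma)\sum_{\substack{S\subseteq\{1,\dots,n\}\\ |S|=k}}\prod_{i\in S}b_{i\sigma(i)}\prod_{i\notin S}a_{i\sigma(i)},
$$
and the same expansion applies on the right-hand side with $\chi'$ and $\Phi(A),\Phi(B)$. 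Because $|\FF|\ge n+1$, the identity \eqref{eq:idenitiy} holds at more than $n$ distinct values of $\lambda$, so the two polynomials of degree $\le n$ coincide, and therefore all their coefficients agree: $c_{k}(A,B)=c_{k}'(\Phi(A),\Phi(B))$ for $k=0,1,\dots,n$. The extreme cases $k=0,n$ give $d_{\chi}(A)=d_{\chi'}(\Phi(A))$ and $d_{\chi}(B)=d_{\chi'}(\Phi(B))$, while the cases $k=1,n-1$ produce identities linear in one matrix argument, and these carry the additive information about $\Phi$.

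Next I would deduce that $\Phi$ is additive, homogeneous, and injective. The engine is the coefficient $c_{1}(A,B)=c_{1}'(\Phi(A),\Phi(B))$, which is linear in its second argument. Letting $B=E_{ij}$ range over the matrix units and observing that
$$
c_{1}(A,E_{ij})=\frac{\partial d_{\chi}}{\partial a_{ij}}(A)=\sum_{\sigma\colon\sigma(i)=j}\chi(\sigma)\prod_{k\ne i}a_{k\sigma(k)},
$$
this records, for each $A$, the full ``immanantal gradient'' of $A$ in terms of that of $\Phi(A)$ (formed with $\chi'$). The monomials $\prod_{k\ne i}a_{k\sigma(k)}$ attached to distinct $\sigma$ with $\sigma(i)=j$ encode distinct bijections of $\{1,\dots,n\}\setminus\{i\}$ onto $\{1,\dots,n\}\setminus\{j\}$, hence are linearly independent; so this partial immanant is a nonzero polynomial precisely when some such $\sigma$ has $\chi(\sigma)\ne 0$ in $\FF$. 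Following \cite[Lemma 4]{Kuz} and the lemmas built on it, the gradient data pins down $\Phi$ on a spanning set, forces the reconstructed map to coincide with an $\FF$-linear map (so $\Phi$ is simultaneously additive and homogeneous), and yields injectivity; bijectivity is then automatic because $\mathbb{M}_{n}(\FF)$ is finite-dimensional over $\FF$.

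The main obstacle — and the sole reason Theorem \ref{thm:main} needed correcting — lives in this reconstruction step. For $\Phi$ to be recoverable from the data $c_{1}(A,E_{ij})$, the $(i,j)$-partial immanant $\partial d_{\chi}/\partial a_{ij}$ must not vanish identically on $\mathbb{M}_{n}(\FF)$, which by the independence of monomials noted above is equivalent to the existence of some $\sigma$ with $\sigma(i)=j$ and $\chi(\sigma)\ne 0$ in $\FF$. Over a field of characteristic $0$ this holds automatically, so \cite{Kuz} could leave it tacit. Over a field of positive characteristic an integer character value can reduce to $0$, whereupon an entire position $(i,j)$ becomes invisible to the immanant and the argument collapses (cf. Example \ref{ex1}). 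The hypothesis added in Theorem \ref{thm:main2} is exactly the assertion that no position is invisible; once it is in force, every partial immanant is a nonzero polynomial, the position-by-position reconstruction of \cite[Lemma 4]{Kuz} goes through unchanged, and the remaining lemmas of \cite{Kuz} deliver linearity and bijectivity. I expect the delicate part of the write-up to be the bookkeeping check that this single hypothesis repairs \emph{every} subsequent lemma of \cite{Kuz} that invoked the characteristic-$0$ behaviour, not merely Lemma 4.
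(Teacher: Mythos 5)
Your proposal is correct and follows essentially the same route as the paper: the paper's own justification of Theorem \ref{thm:main2} is simply that the arguments of \cite{Kuz} (polynomial identity in $\lambda$, coefficient comparison using $|\FF|\ge n+1$, and the position-by-position reconstruction in \cite[Lemma 4]{Kuz}) go through verbatim once the added hypothesis guarantees that every partial immanant $\partial d_{\chi}/\partial a_{ij}$ is a nonzero polynomial. Your write-up just makes explicit the coefficient expansion and the linear independence of the monomials, which the paper leaves implicit by citing \cite{Kuz}.
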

In fact, as we show next, this additional assumption is automatically satisfied if $ \charac(\FF)\neq 2 $. To make the proof easier, we introduce an additional terminology.

A rim-hook is \textit{complete} if it is of maximal possible length. Such a rim-hook consists of a chain of consecutive edge-connected cells which start at the last cell of the first row in a diagram and continue all the time leftwards or downwards ending up at the last cell of the first column. Since the chain starts at the last cell of the first row and ends up at the last cell of the first column, we have moved $ s-1 $ times leftwards and $ r-1 $ times downwards, where $ r $ is the number of rows and $ s $ is the number of columns in our diagram. By adding also the starting cell of the chain, we see that the length of a complete rim-hook is $ (s-1)+(r-1)+1=s+r-1 $. Notice that the length of a complete rim-hook is $ 1 $ if and only if $ |{\bm \lambda}|=1 $.

\begin{lemma}\label{newnew}
Let $ n \geq 3 $. If $ \chi $ is an irreducible character of $ S_n $, then for every $(i,j)\in\{1,\dots,n\}$ there
exists a permutation $\sigma\in S_n$ such that $\sigma(i)=j$ and $ \chi(\sigma) \in \{-2, -1,  1, 2 \}$.
\end{lemma}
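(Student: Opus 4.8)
The plan is to produce, for each pair $(i,j)$, a single cycle type of $S_n$ that simultaneously admits a permutation $\sigma$ with $\sigma(i)=j$ and on which $\chi=\chi^{{\bm \lambda}}$ takes a value in $\{-2,-1,1,2\}$; since $\chi$ is a class function, only the cycle type matters. A cycle type admits such a $\sigma$ exactly when it has a part $\ge 2$ (case $i\ne j$: seat $i$ immediately before $j$ inside a long cycle) or a part equal to $1$ (case $i=j$: seat $i$ at a fixed point). So I would split into these two cases and control the character value through the Murnaghan--Nakayama rule (Theorem \ref{Murnaghan-Nakayama Rule}).

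The engine is the complete rim-hook. Peeling complete rim-hooks off ${\bm \lambda}$ recursively yields a strictly decreasing sequence $\ell_1>\dots>\ell_d$ (the hook lengths of the diagonal cells, since removing a complete rim-hook deletes the first row together with the first column). At every step the complete rim-hook is the unique rim-hook of its length, because it is the rim-hook attached to the corner cell $(1,1)$, whose hook length $s+r-1$ strictly dominates that of any other cell. Consequently, running Murnaghan--Nakayama on the cycle type $(\ell_1,\dots,\ell_d)$ in decreasing order leaves a single surviving term at each stage, so that $\chi^{{\bm \lambda}}(\ell_1,\dots,\ell_d)=\pm1$. Checking this uniqueness carefully is the first technical point.

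For $i\ne j$ I would simply take this class: as $n\ge3$ gives $\ell_1=s+r-1\ge2$, there is a cycle of length at least two into which $i,j$ can be inserted consecutively, and the value is $\pm1$. For $i=j$ I would peel off only the first $d-1$ complete rim-hooks --- again forced, contributing a fixed sign $\epsilon=\pm1$ --- thereby reducing to the corner hook $H$ (a hook of size $\ell_d$), on whose cells I am free to choose a cycle type. Writing $H=(\ell_d-b,1^{b})$, I would use $\nu=(b,\ \ell_d-1-b,\ 1)$ when $H$ is bent and $\nu=(\ell_d-1,1)$ when $H$ is a single row or column; either $\nu$ has a fixed point, to which $i$ is assigned, and the total value is $\epsilon\cdot\chi^{H}(\nu)$.

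To evaluate $\chi^{H}(\nu)$ I would use that a hook character is the $b$-th elementary symmetric function of the eigenvalues of the element on the standard representation; for the cycle type $\nu$ these eigenvalues are the $b$-th roots of unity together with the $(\ell_d-1-b)$-th roots of unity, whence $\chi^{H}(\nu)=[x^{b}](1-(-x)^{b})(1-(-x)^{\ell_d-1-b})$, equal to $(-1)^{b+1}$ unless $2b=\ell_d-1$, in which case it is $2(-1)^{b+1}$. Thus the value always lands in $\{-2,-1,1,2\}$. I expect the genuine obstacle to be precisely the bent case of $i=j$: the naive class $(\ell_1,\dots,\ell_{d-1},\ell_d-1,1)$ gives $0$, since a length-$(\ell_d-1)$ rim-hook leaving a single cell would be the disconnected set $H\setminus\{(1,1)\}$, and the two admissible corner-removals of $H$ cancel. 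Breaking the last hook into two cycles plus a fixed point is what defeats this cancellation and is exactly where the value $2$ appears (e.g. $H=(2,1)$ forces the identity on its cells, giving $\chi^{(2,1)}(\mathrm{id})=2$).
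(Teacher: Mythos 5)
Your proposal is correct, and its skeleton coincides with the paper's: peel off complete rim-hooks (each being the unique rim-hook of its length, so each Murnaghan--Nakayama step is forced up to sign) until only the corner hook $H=(k,1^{t})$ remains, then finish with a class on $H$ that contains a fixed point when $i=j$. The differences lie in the organization and in the endgame. The paper constructs, for each diagram, essentially one cycle type that serves all pairs $(i,j)$ simultaneously, splitting into six cases according to the shape of $H$ and treating ${\bm \lambda}=(2,1)$ separately (since its class would otherwise be the identity), and it evaluates $\chi^{H}$ by a further round of Murnaghan--Nakayama; in the symmetric case it exhibits the two length-$(k-1)$ rim-hooks whose contributions add to $\pm 2$. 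You instead split at the top into $i\ne j$, where the class $(\ell_1,\dots,\ell_d)$ of distinct principal hook lengths already gives $\pm 1$ with no extra work, and $i=j$, where you put $\nu=(t,\,k-1,\,1)$ on $H$ and evaluate $\chi^{H}(\nu)$ in one stroke via $\chi^{(k,1^{t})}=\Lambda^{t}(\text{standard rep})$ and $\prod_{\zeta^{c}=1}(1+\zeta x)=1-(-x)^{c}$, obtaining the uniform value $(-1)^{t+1}$ (or $2(-1)^{t+1}$ exactly when $t=k-1$). This absorbs the paper's cases (iii) and (vi) into a single computation — and correctly explains why the naive class $(\ell_d-1,1)$ fails on a bent hook — at the price of importing the exterior-power description of hook characters, which the paper avoids by staying entirely within the Murnaghan--Nakayama framework. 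Both arguments are complete; yours is somewhat shorter, the paper's is more self-contained.
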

\begin{proof}
Let $ {\bm \lambda} $ be a diagram with $ |{\bm \lambda}|=n \geq 3 $ such that $ \chi=\chi^{{\bm \lambda}} $, and let 
$$ R_{{\bm \lambda}}= \{ {\bm \lambda}={\bm \lambda}^{(0)} \rightarrow {\bm \lambda}^{(1)} \rightarrow \dots \rightarrow {\bm \lambda}^{(j)}=(k,1^{t}) \} $$
be the recursive procedure starting with a diagram $ {\bm \lambda} $ whereby at step $ i $, a complete rim-hook is removed from the obtained diagram $ {\bm \lambda}^{(i-1)} $, $ i \in \{ 1, \dots, j \} $, until we reach a diagram $ {\bm \lambda}^{(j)} $ which is its own rim-hook; that is
$$ {\bm \lambda}^{(j)}=(k,1^{t}) $$
 for some $ 0 \leq k, t \leq n$. 

At each step $ i $, the length of the complete rim-hook of $ {\bm \lambda}^{(i)} $ equals $ \ell_{i}=s_{i}+r_{i}-1 $, where $ s_{i} $ and $ r_{i} $ denote the number of columns and rows of $ {\bm \lambda}^{(i)} $, respectively. Since for any $ i $, $ s_{i+1} < s_{i} $ and $  r_{i+1} < r_{i} $, then $ \ell_{i+1} < \ell_{i} $.

Let us now define the conjugacy classes of the desired permutations as follows:
\smallskip

\textbf{Case (i)}: $ j=0 $ and  $ {\bm \lambda}^{(j)}={\bm \lambda}=(2,1) $. 
From Murnaghan-Nakayama rule, one easily observes that 
$$ \chi^{(2,1)}(\mathrm{id})=2 \quad \quad \text{and} \quad \quad \chi^{(2,1)}(\sigma)=\chi^{(2,1)}(\sigma^{-1})=1, $$
where $ \sigma=(1,2,3) $ is a long cycle.
Notice that one among the permutations $ \mathrm{id}, \sigma , \sigma^{-1} $ maps $ i $ into $ j $.

In the remaining cases we will find a single permutation $ \sigma$ to do the job. In fact, it suffices to give its cycle type so that cycle lengths are not increasing and the first one has length at least two, while the last one has length one. If $ j > 0 $, we will start by cycle lengths $ \ell_{1}, \dots, \ell_{j} $. Since at each step $ i \in \{ 1, \dots, j \} $, there exists only one complete rim-hook (i.e., of length $ \ell_{i}$), then by Murnaghan-Nakayama rule (Theorem \ref{Murnaghan-Nakayama Rule}) any such permutation $ \sigma $ will satisfy
$ \chi^{{\bm \lambda}}(\sigma)=\pm\chi^{{\bm \lambda}}(\sigma \setminus \{\sigma_{1}, \dots, \sigma_{j} \}) $, where $ |\sigma_{i}|=\ell_{i} $, $ i \in \{ 1, \dots , j\}$.
\smallskip

\textbf{Case (ii)}: $ j \geq 0 $, and  $ {\bm \lambda}^{(j)}=(1) $ is a single cell. Since $ |{\bm \lambda}| \geq 3 $, then $ j > 0 $. The desired permutation $ \sigma $ is of cycle type $ (\ell_{1}, \dots, \ell_{j},1) $. Therefore $ \chi^{{\bm \lambda}}(\sigma)=\pm1$.
\smallskip

\textbf{Case (iii)}: $ j \geq 0 $, and $ k \neq t+1  $ with $ k \geq 2 $ and $ t \geq 1 $; i.e., $ (k,1^{t}) $ is a non-symmetric rim-hook with at least two columns and at least two rows. The desired permutation $ \sigma $ is of cycle type $ (\ell_{1}, \dots, \ell_{j}, M, 1^{m+1}) $, where $ M=\max\{ k-1, t \} $ and $ m=\min\{ k-1, t \} $. Therefore, by Murnaghan-Nakayama rule, $ \chi^{{\bm \lambda}}(\sigma)=\pm\chi^{(k,1^{t})}(\sigma_{j+1} \sigma_{j+2} \dots \sigma_{m+j+2} )=\pm\chi^{ \bm \alpha}(1^{m+1})=\pm1 $, where $ |\sigma_{j+1}|=M $, $ |\sigma_{j+2}|=\dots=|\sigma_{m+j+2}|=1 $, and the diagram $ {\bm \alpha} \in \{ (1^{m+1}), (m+1) \}  $.
\smallskip

\textbf{Case (iv)}: $ j > 0 $, and $ k \neq t+1  $ with either $ k=1 $ or $ t=0 $; i.e., ${\bm \lambda}^{(j)} $ is either a single column $ (1^{t+1}) $ or a single row $ (k) $. The desired permutation is of cycle type $ (\ell_{1}, \dots, \ell_{j-1}, 1^{M+1} )$, where $ M=\max\{ k-1, t \} $. Therefore, by Murnaghan-Nakayama rule,  $ \chi^{{\bm \lambda}}(\sigma)=\pm\chi^{{\bm \beta}}(\sigma_{j+1} \dots \sigma_{M+j+1})=\pm1 $, where $ |\sigma_{j+1}|=\dots=|\sigma_{M+j+1}|=1 $ and the diagram $ {\bm \beta} \in \{ (1^{M+1}), (M+1) \} $.
\smallskip

\textbf{Case (v)}: $ j = 0 $, and $ k \neq t+1  $ with either $ k=1 $ or $ t=0 $; i.e., ${\bm \lambda}={\bm \lambda}^{(j)} $ is either a single column $ (1^{t+1}) $ or a single row $ (k) $. The desired permutation $ \sigma $ is of cycle type $ (2, 1^{n-2}) $, and by Murnaghan-Nakayama rule, $ \chi^{{\bm \lambda}}(\sigma)=\pm 1$.
\smallskip

\textbf{Case (vi)}: $ j \geq 0 $, and $ k=t+1 > 2 $; i.e., $ {\bm \lambda}^{(j)}=(k,1^{k-1}) $ is a symmetric rim-hook. The desired permutation $ \sigma $ is of cycle type $ (\ell_{1}, \dots, \ell_{j},k-1, k-1, 1) $.
Now, there are two rim-hooks of lengths $ k-1 $ in $ {\bm \lambda}^{(j)} $:
the horizontal one $ R^{h} $ and the vertical one $ R^{v} $. If at step $ j+1 $, we remove $ R^{v} $ (respectively, $ R^{h} $), then the obtained diagram is a single row $ {\bm \lambda}^{(j+1)}=(k) $ (respectively, a single column $ {\bm \lambda}^{(j+1)}=(1^{k}) $). So, at step $  j+2 $, we remove from $ (k)$ (respectively, $ (1^{k})$) the horizontal (respectively, vertical) rim-hook of length $ k-1 $. In both cases $ {\bm \lambda}^{(j+2)}=(1) $ is obtained. 
%the horizontal one $ R^{h} $ and the vertical one $ R^{v} $. If at step $ j+1 $ we remove $ R^{v} $, then the obtained diagram is a single row $ {\bm \lambda}^{(j+1)}=(k) $, and if we remove $ R^{h} $, then the obtained diagram is a single column $ {\bm \lambda}^{(j+1)}=(1^{t+1}) $. So, at step $  j+2 $, we remove from $ (k)$ the horizontal rim-hook of length $ k-1 $, and from $ (1^{t+1})$ the vertical rim-hook of length $ k-1 $. In both cases $ {\bm \lambda}^{(j+2)}=(1) $ is obtained. 
Therefore, by Murnaghan-Nakayama rule, $ \chi^{{\bm \lambda}}(\sigma)=\pm\chi^{(k,1^{k-1})}(\sigma_{j+1}\sigma_{j+2}\sigma_{j+3})$, where $ |\sigma_{j+1}|=|\sigma_{j+2}|=k-1 $ and $ |\sigma_{j+3}|=1 $.~Then
 \begin{align*}
 \chi^{(k,1^{k-1})}(\sigma_{j+1}\sigma_{j+2}\sigma_{j+3})&=(-1)^{(k-1)-1}\chi^{(k)}(\sigma_{j+2}\sigma_{j+3})+\chi^{(1^{k})}(\sigma_{j+2}\sigma_{j+3})\\
 &=(-1)^{k-2}\chi^{(1)}(\sigma_{j+3})+(-1)^{(k-1)-1}\chi^{(1)}(\sigma_{j+3})\\
 &=2(-1)^{k-2}\chi^{(1)}(\sigma_{j+3})=2(-1)^{k-2}.
 \end{align*}
 Therefore $ \chi^{{\bm \lambda}}(\sigma)=\pm2 $.
\end{proof}

\begin{corollary}
Let $n\ge3$ be an integer, let $\FF$ be a field with $|\FF|\ge n+1$, and  let $\chi,\chi'$ be two irreducible complex
characters of the symmetric group $S_n$. Suppose $\Phi \colon \mathbb{M}_n(\FF)\to \mathbb{M}_n(\FF)$ is any map with the property
\begin{equation*}
d_{\chi}(A+\lambda B)=d_{\chi'}(\Phi(A)+\lambda\Phi(B));\qquad (A,B\in
\mathbb{M}_{n}(\FF);\;\lambda\in\FF).
\end{equation*}
If $ \charac(\FF)\neq 2 $, then $\Phi$ is automatically linear and bijective.
\end{corollary}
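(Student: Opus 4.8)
The plan is to reduce the statement directly to Theorem \ref{thm:main2}, whose conclusion is exactly what we want, by verifying its hypothesis under the standing assumption $\charac(\FF)\neq 2$. Recall that Theorem \ref{thm:main2} guarantees that $\Phi$ is linear and bijective provided that for every pair $(i,j)\in\{1,\dots,n\}$ there exists a permutation $\sigma\in S_n$ with $\sigma(i)=j$ and $\chi(\sigma)\neq 0$ as an element of $\FF$. So the entire task is to produce, for each $(i,j)$, such a permutation $\sigma$.

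The key ingredient is Lemma \ref{newnew}, which does almost all the work: for $n\geq 3$ and any irreducible character $\chi$ of $S_n$, and for every pair $(i,j)$, it supplies a permutation $\sigma$ with $\sigma(i)=j$ whose character value satisfies $\chi(\sigma)\in\{-2,-1,1,2\}$. The crucial feature here is not merely that $\chi(\sigma)$ is a nonzero integer, but that it lies in the restricted set $\{-2,-1,1,2\}$; this is what confines the possible vanishing to a single prime. First I would fix an arbitrary pair $(i,j)$ and invoke Lemma \ref{newnew} to obtain the corresponding $\sigma$.

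Next I would observe that the integers $\pm 1$ always map to nonzero elements under the canonical ring homomorphism $\ZZ\to\FF$, regardless of the characteristic, while $\pm 2$ maps to a nonzero element precisely when $\charac(\FF)\neq 2$. Consequently, under the hypothesis $\charac(\FF)\neq 2$, every value in $\{-2,-1,1,2\}$ reduces to a nonzero element of $\FF$, and therefore $\chi(\sigma)\neq 0\in\FF$. Since $(i,j)$ was arbitrary, the hypothesis of Theorem \ref{thm:main2} is satisfied for the character $\chi$.

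Finally, applying Theorem \ref{thm:main2} to the map $\Phi$ yields that $\Phi$ is automatically linear and bijective, completing the proof. I do not anticipate any genuine obstacle here, as the argument is a clean composition of Lemma \ref{newnew} and Theorem \ref{thm:main2}; the only point meriting emphasis is conceptual rather than technical, namely that the sharp bound $\chi(\sigma)\in\{-2,-1,1,2\}$ in Lemma \ref{newnew} is exactly strong enough to ensure that characteristic $2$ is the sole exceptional case.
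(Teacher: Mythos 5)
Your argument is correct and is essentially identical to the paper's own proof: both invoke Lemma \ref{newnew} to produce, for each pair $(i,j)$, a permutation $\sigma$ with $\sigma(i)=j$ and $\chi(\sigma)\in\{-2,-1,1,2\}$, note that these values are nonzero in any field of characteristic different from $2$, and then apply Theorem \ref{thm:main2}. Your write-up merely spells out the reduction modulo the characteristic a bit more explicitly than the paper does.
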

\begin{proof}
Assume $ \charac(\FF)\neq 2 $. It follows from Lemma \ref{newnew} that for every $(i,j)\in\{1,\dots,n\}$ there
exists a permutation $\sigma\in S_n$ such that $\sigma(i)=j$ and $ \chi(\sigma) \neq 0\in\FF$. The rest follows from \cite{Kuz} (see also Theorem \ref{thm:main2}).
\end{proof}

Let us finish by showing that in the fields of characteristic $ 2 $, the results are different. 

\begin{example}\label{ex1}
\textnormal{
Assume $n=5$ and
let the irreducible character $\chi^{(3,1,1)} \colon S_5\to\ZZ$ correspond to a
partition $5=3+1+1$.  By Murnaghan-Nakayama rule, one can show that
$$\chi^{(3,1,1)} \colon \left(\begin{smallmatrix}
(1)&(1,2) &(1,2)(3,4)&(1,2,3)&(1,2,3)(4,5)&(1,2,3,4)&(1,2,3,4,5)\\
6&0 & -2 &0&0&0&1\end{smallmatrix}\right).$$
Consequently, if $\FF$ is a field of characteristic $ 2 $, then no
permutation $ \sigma $ with $\chi^{(3,1,1)}(\sigma)\ne0\pmod 2$ satisfies $\sigma(1)=1$. 
Moreover, the conclusions of Theorem \ref{thm:main} are not valid for $\chi=\chi'=\chi^{(3,1,1)}$ if $\FF$ is a field of characteristic $ 2 $. Namely, $d_{\chi}(A+\alpha E_{11})=d_{\chi}(A)$ implies that the non-linear and non-bijective map $\Phi\colon \mathbb{M}_5(\FF)\to \mathbb{M}_5(\FF)$, defined by $\Phi \colon X\mapsto X+E_{11}$, satisfies \eqref{eq:idenitiy}.
}
\end{example}

\newpage
\section{\textbf{Figures}}
\begin{figure}[h!]
\begin{center}
\includegraphics[width =9cm]{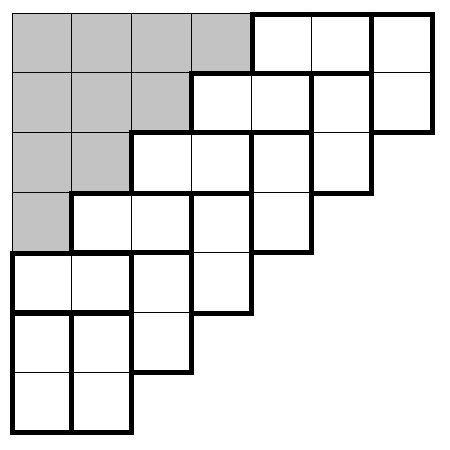}
\caption{An example of an indestructible diagram (see Lemma \ref{Triangle-Remove}).}
\end{center}
\end{figure}

\newpage
\begin{figure}[h!]
\begin{center}
\includegraphics[width =16cm]{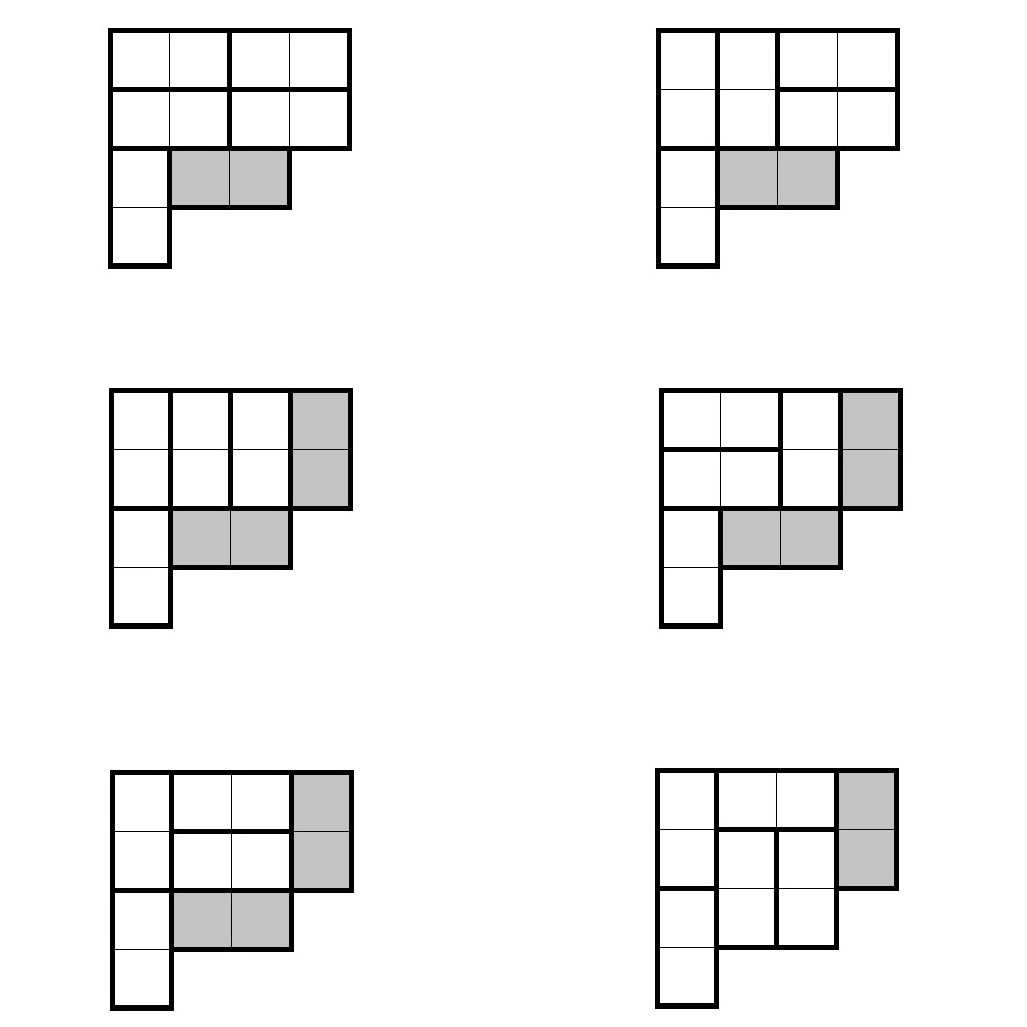}
\caption{All six different tilings of diagram $ (4,4,3,1) $. Notice that the number of vertical dominoes is always odd (c.f.~Lemma \ref{parity2}). Notice also that each tiling contains at least one (shaded) domino rim-hook (c.f.~Lemma \ref{pattern}).}
\end{center}
\end{figure}

\newpage
\begin{figure}[!h]
\centering{\includegraphics[width =15cm]{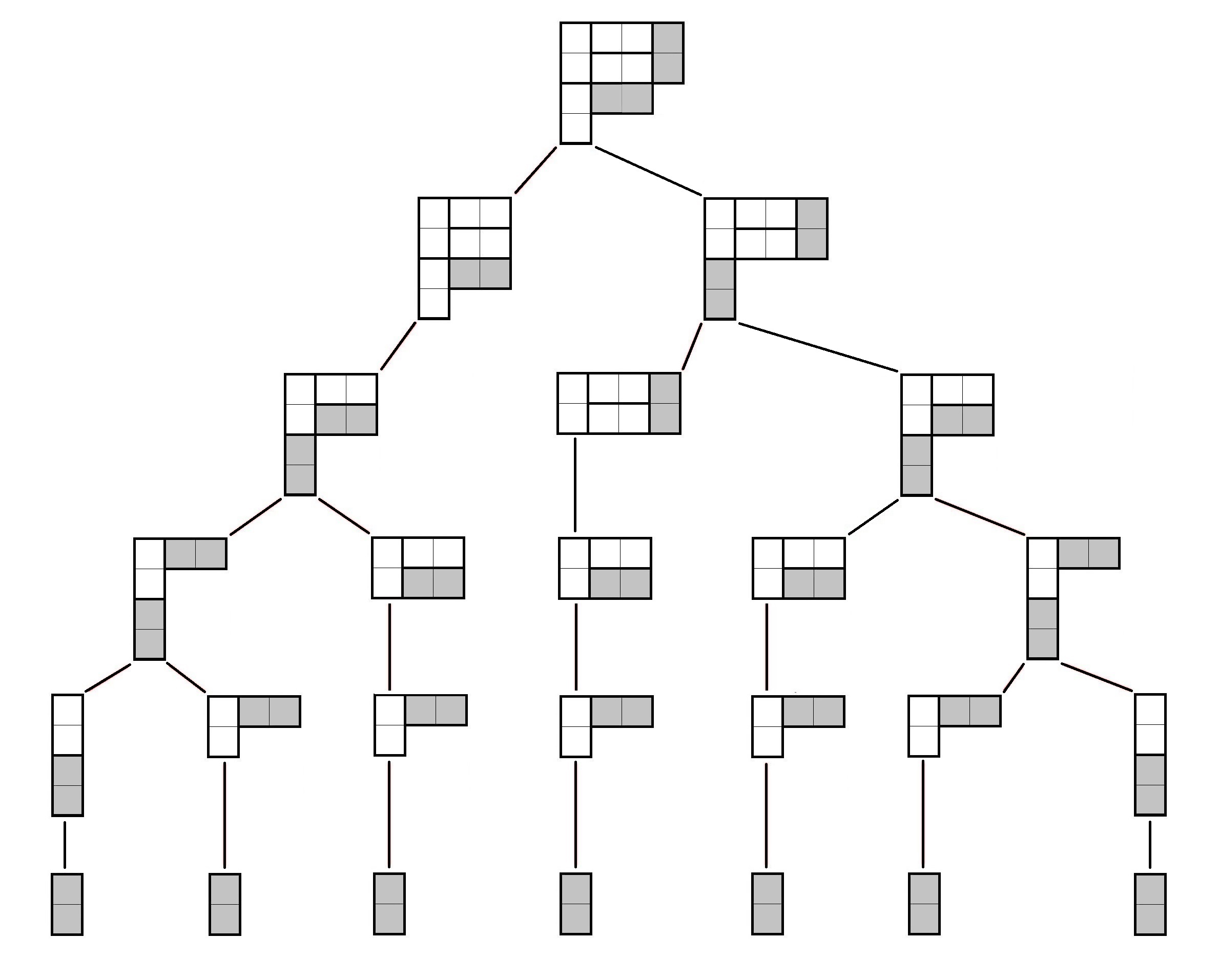}}
\caption{There are $|T|=7$ possibilities to recursively remove rim-hook dominoes by following a fixed tiling $ T $ of diagram $ (4,4,3,1) $ (c.f.~Theorem \ref{character} and the text immediately before it). The domino rim-hooks of each obtained diagram are shaded. \label{decomposition3333}}
\end{figure}

\end{document}